\newcommand{\avint}{\int \hspace{-1.0em}-\,}
\newcommand{\mel}{\MoveEqLeft}
\newcommand{\grad}{\nabla}
\newcommand{\laplace}{\Delta}
\newcommand{\Nu}{\mathit{Nu}}
\newcommand{\Ra}{\mathit{Ra}}
\renewcommand{\div}{\grad\cdot}
\newcommand{\N}{\mathbb{N}}
\newcommand{\R}{\mathbb{R}}
\newcommand{\Z}{\mathbb{Z}}
\newcommand{\la}{\langle}
\newcommand{\ra}{\rangle}
\newcommand{\dd}{{\mathrm d}}
\def\Xint#1{\mathchoice
{\XXint\displaystyle\textstyle{#1}}%
{\XXint\textstyle\scriptstyle{#1}}%
{\XXint\scriptstyle\scriptscriptstyle{#1}}%
{\XXint\scriptscriptstyle\scriptscriptstyle{#1}}%
\!\int}
\def\XXint#1#2#3{{\setbox0=\hbox{$#1{#2#3}{\int}$ }
\vcenter{\hbox{$#2#3$ }}\kern-.59\wd0}}
\def\avint{\Xint-}
\newtheorem{prop}{Proposition}
\newtheorem{theorem}{Theorem}
\newtheorem{lemma}{Lemma}
\begin{document}

\title{\LARGE Infinite Prandtl number convection with Navier-slip boundary conditions}
\author{\large Christian Seis\thanks{Institut f\"ur Analysis und Numerik,   Universit\"at M\"unster, Orl\'eans-Ring 10, 48149 M\"unster, Germany. Email: seis@uni-muenster.de}}
\setkomafont{author}{\sffamily}
\setkomafont{date}{\sffamily}

\maketitle

\begin{abstract}
We are concerned with infinite Prandtl number Rayleigh--B\'enard convection with Navier-slip boundary conditions. The goal of this work is to estimate the average upward heat flux measured by   the non-dimensional Nusselt number $\Nu$ in terms of the Rayleigh number $\Ra$, which is a non-dimensional quantity measuring the imposed temperature gradient.  We derive bounds on the Nusselt number that coincide for relatively small slip lengths with the optimal Nusselt number scaling  for   no-slip boundaries, $\Nu\lesssim \Ra^{1/3}$; for relatively large slip lengths, we recover scaling estimates for   free-slip boundaries, $\Nu\lesssim \Ra^{5/12}$.
 \end{abstract}
 
 \textbf{Statements and Declarations:} The author states that there is no conflict of interest. Data sharing is not applicable to this article as no data sets were generated or analysed during the current study.

\section{Introduction}
Rayleigh--Bénard convection refers to the heat transfer and fluid motion that occurs when a layer of fluid is heated from below and cooled from above. This phenomenon is observed in various fields of physics, including oceanography, atmospheric science, and astrophysics. Characterised by boundary layers, steady convection rolls, and chaotic and turbulent behaviour, as well as stable and unstable parameter regimes, Rayleigh--Bénard convection has become a paradigm in theoretical and experimental fluid dynamics. We refer to the reviews by Siggia \cite{Siggia1994} and Manneville \cite{Manneville2006} for further reading.

A key feature in Rayleigh--B\'enard convection is the enhancement of the heat transport across the layer due to  thermal convection. The non-dimensional quantity measuring the ratio of the total heat flux to the purely conductive heat flux is the Nusselt number $\Nu$. Its dependence on the applied temperature gradient, measured in terms of the non-dimensional Rayleigh number $\Ra$ is of particular interest, see, for instance, \cite{AhlersGrossmannLohse09}.

In the regime of infinite Prandtl numbers, which refers to situations in which inertia is negligible compared to viscous forces, and imposing no-slip boundary conditions on the horizontal boundaries, a marginally stable boundary layer argument of Malkus  suggest the scaling relation $\Nu\sim \Ra^{1/3}$ in the turbulent regime $\Ra\gg1$ between the Nusselt number and the Rayleigh number \cite{Malkus54}. First rigorous bounds on the Nusselt number were obtained from the 1960ies on \cite{Howard63,Busse70,Howard72,Busse79,DoeringConstantin96}. Most notably, Constantin and Doering  \cite{ConstantinDoering99} in 1999  derived the slightly suboptimal bound $\Nu\lesssim \Ra^{1/3}\log^{2/3}\Ra$ by exploiting the maximum principle for the temperature and estimating singular integral kernels that relate the vertical component of the velocity to the temperature distribution. This bound was later improved by Doering, Otto and Reznikoff \cite{DoeringOttoReznikoff06} in 2006  to $\Nu\lesssim \Ra^{1/3}\log^{1/3}\Ra$ by using the background field method \cite{DoeringConstantin94,DoeringConstantin96}. The background field method is a mathematically quite simple tool that is based on a stability estimate for a given background temperature  profile. Any stable profile yields an upper bound  on the Nusselt number, and physically relevant bounds were considered by some authors as a rigorous justification on Malkus' boundary layer theory. However, it was proved by Nobili and Otto \cite{NobiliOtto17} that using the background field method, it is not possible to go beyond the bound $\Nu\lesssim Ra^{1/3} \log^{1/15}\Ra$. Instead Otto and the author \cite{OttoSeis11} in 2011 could further elaborate on Constantin and Doering's strategy to derive a double logarithmic improvement, $\Nu \lesssim \Ra^{1/3}\log^{1/3}\log\Ra$, implying, in particular, that the background field method does not carry physical relevance beyond being a mathematical technique for deriving scaling estimates. Only recently, the logarithmic correction could be completely removed in the work of Chanillo and Malchiodi \cite{ChanilloMalchiodi24}, who explored more sophisticated harmonic analysis tools to further improve on \cite{OttoSeis11}.

In certain flow situations, no-slip boundary conditions are considered to be inaccurate, such as for high altitude atmospheric gases and  microscale fluids. If, instead of no-slip boundary conditions, free-slip (no-stress) boundary conditions are relevant at the horizontal walls of the layer, the fluid flow is less restricted and and the heat transport enhances up to $\Nu\sim \Ra^{5/12}$. This scaling was   observed in numerical simulations by Otero \cite{Otero2002} and rigorous upper bounds were established in \cite{WhiteheadDoering11,WhiteheadDoering12}.

In the present work, our aim is to study  infinite Prandtl number Rayleigh--B\'enard convection with Navier-slip boundary conditions, which, in  a certain sense, interpolate between the no-slip and the free-slip boundary conditions. Indeed, the Navier-slip boundary conditions linearly relate the shear velocity to the tangential stress, their ratio being the (dimensionless)  slip length $\sigma$, so that $\sigma=0$ leads to no-slip boundary conditions while $\sigma=\infty$ gives free-slip boundary conditions. This model thus introduces one additional parameter, thanks to which it becomes applicable to a wide range of fluid models covering any intermediate boundary condition between no-slip and free-slip.

 In our main result, we obtain that interpolation on the level of the Nusselt number.

\begin{theorem}
\label{T1}
Let $\sigma>0$ be given. In the regime $\Ra\gg1$, the Nusselt number satisfies the bounds
\[
\Nu \lesssim \begin{cases} (\Ra)^{1/3}&\mbox{for }\sigma\lesssim \Ra^{-1/3},\\
(\sigma \Ra)^{1/2}&\mbox{for } {\Ra^{-1/3}}\lesssim \sigma \lesssim  {\Ra^{-1/6}},\\
\Ra^{5/12}& \mbox{for }\sigma \gtrsim  {\Ra^{-1/6}}.
\end{cases}
\]
\end{theorem}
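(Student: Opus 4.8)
The plan is to follow the strategy of Constantin and Doering, adapted to the Navier-slip setting, and to track carefully how the slip length $\sigma$ enters the estimates for the Biot--Savart-type kernel relating the vertical velocity $w$ to the temperature $T$. First I would set up the governing equations: in the infinite Prandtl number limit the momentum equation reduces to the (stationary, forced) Stokes system $-\laplace u + \grad p = \Ra\, T e_3$, $\div u = 0$, coupled to the advection--diffusion equation $\del_t T + u\cdot\grad T = \laplace T$, on the slab $\T^2\times(0,1)$ with Navier-slip conditions $u_3=0$, $\del_3 u_h = -\sigma^{-1} u_h$ (in nondimensional form) on $\{x_3=0,1\}$. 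I would recall the standard representation $\Nu = 1 + \la w T\ra$ (long-time and horizontal average), together with the bound $\Nu \geq c$ and the maximum principle $0\le T\le 1$, and the identity $\Nu = \la |\grad T|^2\ra$. The key linear object is the solution operator $T\mapsto w$ for the Stokes problem; eliminating the pressure and the horizontal velocity, $w$ solves a fourth-order problem $\laplace^2 w = -\Ra\, \laplace_h T$ with boundary conditions $w=0$ and a $\sigma$-dependent condition on $\del_3^2 w$ (the Navier-slip analogue of the free-slip $\del_3^2 w=0$ and no-slip $\del_3 w = 0$ conditions).

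The heart of the argument is a pointwise-in-height estimate for $w$ in terms of $T$, of the form
\[
\left| \la w T\ra \right| \lesssim \Ra \int_0^1 \min\!\left\{ x_3,\ \sigma + x_3^{?}\right\} \cdot (\text{something involving } \|T(\cdot,x_3)\|) \, \dd x_3,
\]
made precise by a careful analysis of the Green's function of the fourth-order operator. Concretely I would: (i) diagonalise in the horizontal Fourier variable $k$, reducing to an ODE two-point boundary value problem on $(0,1)$ for each wavenumber $|k|$; (ii) estimate the kernel $G_k(x_3,y_3)$, showing that near the bottom boundary it behaves like the no-slip kernel on scales $x_3 \lesssim \sigma$ after rescaling — more precisely, that the relevant bound interpolates between $|G_k| \lesssim |k|\, x_3 y_3 \wedge \dots$ (no-slip, responsible for $\Ra^{1/3}$) and $|G_k|\lesssim \dots$ (free-slip, responsible for $\Ra^{5/12}$), with the crossover governed by comparing $x_3$ to $\sigma$; (iii) insert the maximum principle $T\le 1$ in the bulk and a $H^1$-type control $\|T(\cdot,x_3)\|_{L^2_h} \lesssim x_3^{1/2}\la|\grad T|^2\ra^{1/2} = x_3^{1/2}\Nu^{1/2}$ near the boundary. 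Combining these as in \cite{ConstantinDoering99,OttoSeis11} gives, after optimising the cutoff height $\delta$ that separates the boundary-layer from the bulk contribution, an inequality of the schematic form $\Nu \lesssim \delta \Ra + \delta^{-2}$ in one regime and $\Nu \lesssim \sigma^{1/2}\delta^{1/2}\Ra + \delta^{-2}$, $\Nu \lesssim \delta^{1/2}\Ra\log(\dots) + \delta^{-2}$ in the others; choosing $\delta$ optimally in each case produces the three branches $\Ra^{1/3}$, $(\sigma\Ra)^{1/2}$, $\Ra^{5/12}$, and the thresholds $\sigma\sim\Ra^{-1/3}$, $\sigma\sim\Ra^{-1/6}$ emerge precisely where two branches coincide.

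I expect the main obstacle to be step (ii): obtaining kernel estimates that are \emph{uniform in $\sigma$} and correctly capture both asymptotic regimes simultaneously. The Navier-slip boundary condition mixes the Dirichlet-type ($w=0$) and Neumann-type structure in a $\sigma$-weighted way, so the Green's function does not factor as cleanly as in the two limiting cases; one must show that the parameter $\sigma$ enters only through the combination $x_3/\sigma$ (or $|k|\sigma$) in the boundary layer and is negligible in the bulk. A secondary technical point is handling the logarithmically divergent wavenumber sum that appears in the free-slip-like regime — this is the classical reason the pure $\Ra^{1/3}$ bound came with a logarithm in \cite{ConstantinDoering99}, and here, as long as $\sigma \gtrsim \Ra^{-1/6}$, one must check that the boundary-layer width stays large enough that the sum is controlled by $\Ra^{5/12}$ without a logarithmic loss (or absorb a harmless log into the $\lesssim$, consistent with the statement being for $\Ra\gg1$). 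Once the kernel bounds are in place, the optimisation over $\delta$ and the identification of the three regimes is routine bookkeeping.
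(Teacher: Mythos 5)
There is a genuine gap at the heart of your plan. The scheme you describe --- horizontal Fourier diagonalisation of $\laplace^2 w=-\Ra\laplace_y T$, pointwise bounds on the Green's function $G_k(x_3,y_3)$, the maximum principle $0\le T\le 1$, and optimisation of a cutoff $\delta$ --- is exactly the Constantin--Doering strategy, and in the no-slip limit it is known to produce $\Nu\lesssim\Ra^{1/3}\log^{2/3}\Ra$ (improved to $\log^{1/3}$ and $\log^{1/3}\log$ by later refinements), \emph{not} the pure power law $\Ra^{1/3}$ claimed in the first branch of the theorem. Your suggestion to ``absorb a harmless log into the $\lesssim$'' is not available: the statement is a clean power law, and removing the logarithm is precisely the hard part. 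The paper does not attempt uniform-in-$\sigma$ kernel estimates for the Navier-slip Green's function at all. Instead it rescales so that $\Ra$ drops out (layer height $H=\Ra^{1/3}$), writes $w_\sigma=w_0+h_\sigma+g_\sigma$ where $w_0$ is the \emph{no-slip} solution, and imports the recent log-free bound $|\la w_0\theta\ra|\lesssim 1$ of Chanillo--Malchiodi, whose proof requires genuinely different harmonic analysis (a Coifman--Meyer-type bilinear estimate, BMO control of the temperature, non-tangential maximal functions) rather than pointwise kernel bounds plus the maximum principle. The corrections $h_\sigma,g_\sigma$ solve homogeneous bi-Laplace problems with $\sigma$-dependent boundary data; they are estimated through an explicit half-space kernel $B_\sigma$ (Mikhlin--H\"ormander decay bounds), the same bilinear machinery for its singular part, and explicit Fourier computations for the bounded-domain remainders, yielding $|\frac1\delta\int_0^\delta\la(w_\sigma-w_0)\theta\ra\,\dd z|\lesssim\sigma\delta+\sigma^{1/2}\delta^{3/2}$. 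This is where the intermediate $(\sigma\Ra)^{1/2}$ branch actually comes from; it is not produced by a $\sigma$-weighted interpolation inside a single kernel bound, and the paper stresses that for $1\lesssim\sigma$ the argument is not perturbative --- one must identify and estimate the leading-order contributions of the correction.

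A second, smaller mismatch: for the third branch the paper abandons kernels entirely and runs a short energy/interpolation argument in the spirit of Whitehead--Doering --- testing the bi-Laplace equation with $(-\Delta_y)^{-1}w$ and with $w$ to get $\int\la|\grad w|^2\ra\lesssim H\Nu$ and $\int\la|\grad^2 w|^2\ra\lesssim H^{1/2}\Nu$ (the Navier-slip boundary terms have a favourable sign), then using that $\partial_z w$ vanishes at some height to bound $\sup_z\la(\partial_z w)^2\ra\lesssim H^{3/4}\Nu$, and closing the resulting inequality $\Nu\lesssim\delta H^{3/8}\Nu^{1/2}+\delta^{-1}$. Note also that the localisation \eqref{1} costs $\delta^{-1}$, not the $\delta^{-2}$ in your schematic inequalities, which would already skew your exponent bookkeeping. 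So while your outline correctly identifies the three regimes and the crossover values of $\sigma$, the two essential ingredients that make the stated exponents attainable --- the log-free no-slip input via bilinear estimates, and the separate energy argument for the free-slip-dominated regime --- are missing from the proposal, and the step you flag as ``routine'' (controlling the logarithmic divergence) is exactly where the proposed route fails.
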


Apparently, in  the regime $\sigma \lesssim \Ra^{-1/3}$, we recover the  Nusselt number scaling that is optimal in the no-slip model, while for  $\sigma \gtrsim \Ra^{-1/6}$, we recover the Nusselt number scaling for free-slip boundaries. The intermediate $(\sigma \Ra)^{1/2}$ scaling is peculiar to the Navier-slip boundary conditions. To the best of the author's knowledge, it has not been reported in the literature and it would be interesting  to see if it can be reproduced in experiments or numerical simulations.  We caution the reader that this intermediate scaling  \emph{must not} be confused with the ultimate $Nu\sim Ra^{1/2}$ scaling, as it applies only to relatively small slip lengths, and the exponent on the Rayleigh number in our Nusselt bounds will never exceed the free-slip exponent $5/12$.

Navier-slip Rayleigh--B\'enard convection attracted some interest in recent years. Nusselt bounds in the finite Prandtl number case were derived in \cite{DrivasNguyenNobili22,BleitnerNobili24}. The bounds in these works obtain the $5/12$ free-slip scaling for large slip lengths and large Prandtl numbers, but the no-slip $1/3$ scaling could not be established. For a discussion of the boundary conditions we refer to \cite{NetoEvansBonaccursoButtCraig2005}. Earlier estimates on the Nusselt number are reviewed in \cite{Nobili23}.

We finally introduce the mathematical model for the Rayleigh--B\'enard convection under consideration:
 We consider a  layer of fluid that is confined in a cell $[0,\ell]^2\times [0,1]$, where $\ell$ is the aspect ratio of the horizontal extension to the vertical extension. 
The evolution of the fluid and the transport of heat inside this layer is the described by the following system of partial differential equations:
\begin{align}
\partial_t T + u\cdot \grad T & = \laplace T,\label{50}\\
\div u &=0,\label{51}\\
-\laplace u +\grad p & =  \Ra Te_3.\label{52}
\end{align}
Here, $T=T(t,x)\in \R$ is the temperature, $u=u(t,x)\in\R^3$ is the fluid velocity and $p=p(t,x)\in\R$ is the hydrodynamic pressure. The vector $e_3\in\R^3$ is the unit normal vector pointing in the direction of $x_3$, which is the upward direction. The first of these equations describes the transport of heat due to advection and conduction. Equations \eqref{51} and \eqref{52} are the Stokes equations, in which  buoyancy due to thermal expansion is included by a forcing term acting on an incompressible fluid.  This modelling ansatz is commonly referred to as the Boussinesq approximation.

For notational simplicity, in the following, we will write $x=(y,z)$ with $y\in[0,\ell]^2$ and $z\in[0,1]$, and $u = (v,w)\in \R^2\times \R$ to distinguish between horizontal and vertical components.

We suppose that all functions are $\ell$-periodic in both horizontal directions. Heating at the bottom  and cooling at the top boundaries are modelled by
\begin{equation}
\label{53}
T=1\quad\mbox{on }\{z=0\},\qquad T=0 \quad \mbox{on }\{z=1\}.
\end{equation}
The Navier-slip boundary conditions are given by
\begin{equation}
\label{54}
w=0,\, v= \sigma \partial_z v\quad\mbox{on }\{z=0\},\qquad w=0,\, v=-\sigma \partial_z v\quad \mbox{on }\{z=1\}.
\end{equation}
As already mentioned in the introduction, we see here that for vanishing slip length $\sigma=0$, we recover the no-slip boundary conditions $(v,w)=0$, while for infinite slip length, $\sigma=\infty$, we are concerned with free-slip boundary conditions $( \partial_z v,w)=0$. 

The Nusselt number is the ratio of the total upward heat flux to the purely conductive heat flux. With regard to our model \eqref{50}, this amounts to 
\[
\Nu = \int_0^1 \la (uT-\grad T)\cdot e_z\ra_{\ell}\,\dd z = \int_0^1 \la  wT  \ra_{\ell}\,\dd z + 1,
\]
where $\la \cdot \ra_{\ell}$ denotes the horizontal length and large-time average
\[
\la f\ra_{\ell} = \limsup_{t\to \infty} \frac1{t\ell^2} \int_0^t\int_{[0,\ell]^2} f\, \dd y \dd t.
\]

The remainder of the paper is devoted to the proof of Theorem \ref{T1}.

\section{Proofs}\label{S2}
We provide some notation. The \emph{characteristic function} of a (measurable) set $A$ will be denoted by $\chi_A$. The \emph{Fourier transform of a periodic function} $f:[0,L]^2\to \R$ is given by
\[
\widehat f(k) = \frac1{L^2}\int_{[0,L]^2} e^{-ik\cdot y}f(y)\dd y, 
\]
with wave number $k\in\frac{2\pi}L\Z^2$. Then, $f$ has the Fourier series representation
\[
f(y) =\sum_{k\in \frac{2\pi}{L}\Z^2} e^{ik\cdot y}\widehat f(k).
\]
The \emph{Fourier transform of a globally defined function} $f:\R^2\to \R$ is given by
\[
\widehat f(\xi) = \frac1{2\pi}\int_{\R^2} e^{-\xi\cdot y}f(y)\dd y.
\] 
The frequency $\xi$ takes values in $\R^2$.

\subsection{Reformulation of the problem and discussion of the Nusselt number}
Before outlining the strategy of the proof, we will reformulate the problem is a way that was introduced earlier in \cite{OttoSeis11}.

Following \cite{OttoSeis11}, we rescale all variables in such a way that the large dimensionless constant $\Ra$ drops out of the equations. More specifically,  rescaling length by $\Ra^{-1/3}$ and time by $\Ra^{-2/3}$, and setting
\[
H = \Ra^{1/3}, \quad L = \Ra^{1/3}\ell,
\]
we are concerned with the equations
\begin{align}
\partial_t T + u\cdot \grad T & = \laplace T,\label{18}\\
\div u &=0,\label{19}\\
-\laplace u +\grad p & =  Te_z.\label{20}
\end{align}
in the layer $[0,L]^2\times [0,H]$. The temperature boundary conditions now read
\begin{equation}\label{21}
T = 1 \quad \mbox{on }\{z=0\},\qquad T=0\quad \mbox{on }\{z=H\},
\end{equation}
and the Navier-slip boundary conditions are
\begin{equation}\label{22a}
w=0,\, v= \sigma \partial_z v\quad\mbox{on }\{z=0\},\qquad w=0,\, v=-\sigma \partial_z v\quad \mbox{on }\{z=H\}.
\end{equation}
The Nusselt number is the average upward heat flux
\begin{equation}
\label{36}
\Nu =\frac1{H} \int_0^H \la (uT-\grad T)\cdot e_z\ra_L\, \dd z   .
\end{equation}
 In the following, we will simply write $\la \cdot\ra = \la \cdot \ra_L$ for the horizontal length and large-time average.

In the new variables, our main result in Theorem \ref{T1} reads as follows.

\begin{theorem}
\label{T4}Let $\sigma>0$ be given. In the regime $H\gg1$, the Nusselt number satisfies the bounds
\[
\Nu \lesssim \begin{cases} 1 &\mbox{for }\sigma\lesssim 1,\\
\sigma^{1/2}&\mbox{for }1\lesssim \sigma \lesssim H^{1/2},\\
H^{1/4}& \mbox{for }\sigma \gtrsim H^{1/2}.
\end{cases}
\]
\end{theorem}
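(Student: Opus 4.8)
The plan is to follow the Constantin--Doering/Otto--Seis route: reduce the bound on $\Nu$ to an estimate for the convective heat flux across a thin boundary layer, and control the vertical velocity there by inverting the Stokes operator explicitly. I begin by collecting the elementary structural facts. The maximum principle for the transport--diffusion equation \eqref{18} with boundary values \eqref{21} (and $\div u=0$, $w=0$ on the walls) gives $0\le T\le 1$ throughout. Averaging \eqref{18} horizontally and in time shows that $z\mapsto\la wT\ra(z)-\partial_z\la T\ra(z)$ is constant and equals $\Nu$; integrating \eqref{18} against $T$ gives in addition $\int_0^H\la|\grad T|^2\ra\,dz=\Nu$, and integrating \eqref{20} against $u$ together with \eqref{22a} yields $\int_0^H\la|\grad u|^2\ra\,dz+\sigma^{-1}\bigl(\la|v|^2\ra(0)+\la|v|^2\ra(H)\bigr)\le H\Nu$. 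Integrating the first identity over $(0,\delta)$ and using $0\le T\le 1$ gives, for every $\delta\in(0,H)$,
\[
\Nu\ \le\ \frac1\delta\int_0^\delta\la wT\ra\,dz+\frac1\delta .
\]
The reflection symmetry $z\mapsto H-z$, $T\mapsto 1-T$, $w\mapsto-w$ of \eqref{18}--\eqref{22a} lets me work near $\{z=0\}$ only, and by Cauchy--Schwarz together with $\la T^2\ra\le 1$ it suffices to bound $\int_0^\delta\la w^2\ra^{1/2}\,dz$.

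Next I would express $w$ through $T$. Eliminating $p$ and $v$ from \eqref{19}--\eqref{20} (taking the vertical component of $\curl\curl$) gives $\laplace^2 w=-(\partial_{x_1}^2+\partial_{x_2}^2)T$, and differentiating the Navier-slip conditions \eqref{22a} tangentially while using $\partial_z w=-\grad_y\cdot v$ turns them into conditions on $w$ alone: $w=0$ and $\partial_z w=\sigma\,\partial_z^2 w$ on $\{z=0\}$, $w=0$ and $\partial_z w=-\sigma\,\partial_z^2 w$ on $\{z=H\}$ --- with the clamped ($\sigma=0$) and hinged ($\sigma=\infty$) biharmonic conditions as limiting cases. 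A horizontal Fourier transform turns this into a family of fourth-order two-point problems $(\partial_z^2-|k|^2)^2\widehat w=|k|^2\widehat T$ which I solve by their Green's functions, $\widehat w(k,z)=\int_0^H\Phi^\sigma_k(z,z')\,\widehat T(k,z')\,dz'$. The decisive point is the boundary behaviour of $\Phi^\sigma_k$: Taylor expansion at $z=0$ together with $\partial_z\widehat w=\sigma\,\partial_z^2\widehat w$ forces $\widehat w(k,z)\sim z\,(\sigma+z)\,\partial_z^2\widehat w(k,0)$ near the wall, so the kernel interpolates between the second-order (no-slip) vanishing responsible for the $\Ra^{1/3}$ regime and the first-order (free-slip) vanishing responsible for the $\Ra^{5/12}$ regime, with the crossover located at $z\sim\sigma$, equivalently at horizontal frequency $|k|\sim\sigma^{-1}$.

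I would then prove pointwise bounds on $\Phi^\sigma_k$ (and on $\partial_z^2\widehat w(k,0)$) that are uniform in $\sigma$ and display this crossover, and feed them --- using $\|\widehat T(k,\cdot)\|_{L^\infty_z}\le 1$ for the amplitude factors and $\int_0^H\la|\grad T|^2\ra\,dz=\Nu$ (and, where helpful, $\int_0^H\la|\grad u|^2\ra\,dz\le H\Nu$) for the frequency summation --- into an estimate for $\int_0^\delta\la w^2\ra^{1/2}\,dz$ in which $\sigma$ appears explicitly. Inserting the result into $\Nu\le\delta^{-1}\int_0^\delta\la wT\ra\,dz+\delta^{-1}$ and optimising over the layer width $\delta$ produces the trichotomy. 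For $\sigma\lesssim 1$ the optimal $\delta$ lies above $\sigma$ and only the no-slip behaviour of the kernel is seen; one recovers the analysis of \cite{OttoSeis11}, while the positive slip length additionally truncates at scale $\sigma$ the borderline logarithm of the no-slip estimate, leaving the clean bound $\Nu\lesssim 1$. For $\sigma\gtrsim H^{1/2}$ the layer sees only the first-order vanishing of $w$ and the optimisation, combined with the finite-depth constraint from $\{z=H\}$, reproduces the Whitehead--Doering value $\Nu\lesssim H^{1/4}$. In the intermediate range $1\lesssim\sigma\lesssim H^{1/2}$ one balances the two regimes of the kernel against each other and obtains the new scaling $\Nu\lesssim\sigma^{1/2}$.

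The hard part is the kernel step: obtaining estimates on $\Phi^\sigma_k$ that are simultaneously sharp and uniform in $\sigma$ across the crossover $\sigma|k|\sim 1$, and --- in the large-$\sigma$ regime --- squeezing out the extra power beyond the crude energy bound $\Nu\lesssim H^{1/2}$ that is needed to reach the optimal free-slip exponent $\tfrac14$. This is the genuinely harmonic-analytic ingredient: it must exploit the $L^\infty$ bound on $T$ together with the smoothing of the fourth-order solution operator near the boundary, not merely energy inequalities, and the contributions of the far wall $\{z=H\}$ and of the lowest horizontal frequencies have to be treated separately and checked to be negligible.
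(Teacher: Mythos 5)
Your outline stops short of the actual proof at exactly the places where the work lies, and at least one of the deferred steps cannot be completed by the mechanism you propose. In the regime $\sigma\lesssim 1$ you claim that ``the positive slip length truncates at scale $\sigma$ the borderline logarithm'' of the no-slip analysis, leaving $\Nu\lesssim 1$. This cannot work uniformly: the theorem allows arbitrarily small $\sigma>0$ (e.g.\ $\sigma\le H^{-10}$), where the Navier-slip problem is an $O(\sigma)$ perturbation of the no-slip problem, so the clean bound $\Nu\lesssim1$ in rescaled variables is at least as hard as the log-free no-slip bound $\Nu\lesssim\Ra^{1/3}$ --- which is the recent theorem of Chanillo and Malchiodi, obtained with genuinely bilinear harmonic analysis (BMO bounds on $\theta$, nontangential maximal functions, a Coifman--Meyer-type estimate), not with pointwise bounds on the Green's function $\Phi^\sigma_k$ combined with $\|T\|_{L^\infty}\le1$. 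The latter strategy is essentially Constantin--Doering/Otto--Seis and is only known to give $\Ra^{1/3}$ times (iterated) logarithmic corrections; your additional step of applying Cauchy--Schwarz to reduce $\la wT\ra$ to $\int_0^\delta\la w^2\ra^{1/2}$ discards precisely the product structure that the log-free argument must exploit. The paper handles this by perturbing around the no-slip field $w_0$, invoking the Chanillo--Malchiodi flux bound $|\la w_0\theta\ra|\lesssim1$ as a black box, and then estimating the correction $w_\sigma-w_0=h_\sigma+g_\sigma$ through an explicit half-space kernel $B_\sigma$ (with decay estimates, a Coifman--Meyer bilinear bound, and several boundary-error corrections); none of this, nor an equivalent substitute, appears in your plan.

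The other two regimes are likewise asserted rather than proved. The intermediate scaling $\sigma^{1/2}$ is claimed to follow from ``balancing the two regimes of the kernel,'' but no estimate with the correct $\sigma$-dependence is derived; in the paper it comes out of the quantitative bounds $|\la\tilde h_\sigma\theta\ra|\lesssim\sigma z$ and the $f_\sigma$, $\tilde f_\sigma$ error estimates, which require the explicit Fourier computation of $B_\sigma$ and of the finite-layer multipliers. For $\sigma\gtrsim H^{1/2}$ you acknowledge that the crude energy bound only gives $H^{1/2}$ and that an ``extra power'' must be squeezed out, but you do not supply the mechanism; the paper's (short) argument uses the two a priori estimates $\int_0^H\la|\grad w|^2\ra\,\dd z\le H\Nu$ and $\int_0^H\la|\grad^2 w|^2\ra\,\dd z\le H^{1/2}\Nu$, the observation that $\partial_z w$ has a zero on each vertical line (so $\sup_z\la(\partial_z w)^2\ra$ is controlled by the interpolation of these two bounds), and then optimisation in $\delta$. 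So while your general reduction (localised Nusselt bound, biharmonic formulation with boundary conditions $w=0$, $\partial_z w=\pm\sigma\partial_z^2w$, crossover of the kernel at $z\sim\sigma$) is sound and consistent with the paper's starting point, the proposal as it stands is a programme, and its route through pointwise Green's-function bounds alone cannot reach the stated trichotomy in the small-$\sigma$ regime.
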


Our proof interpolates between the no-slip and Navier-slip regimes on the one hand, in which we will exploit and  develop further the ideas by Chanillo and Malchiodi, and the free-slip regime on the other hand, in which will use key estimates of Whitehead and Doering --- though presented in a completely new fashion. 
We will actually prove the following two estimates: First, by extending the results in \cite{ChanilloMalchiodi24},  we find that
\begin{equation}\label{15}
\Nu \lesssim 1+\sigma^{1/2},
\end{equation}
provided that $\sigma\le H$ and $H$ is large. Proving this estimate takes the main part of this paper. We stress that for large slip length $1\le \sigma\le H$, the scaling is new and the analysis is not  of perturbative nature. Instead, the leading order contributions in the Nusselt number have to be carefully identified and estimated.
Second, by invoking ideas from \cite{WhiteheadDoering11}, we obtain
\begin{equation}
\label{16}
\Nu \lesssim H^{1/4}.
\end{equation}
Apparently, the crossover from the Navier-slip  scaling in \eqref{15} to the free-slip scaling \eqref{16} occurs at slippage lengths of the order $\sigma\sim H^{1/2}$.

Starting point in either case is the observation from \cite{OttoSeis11} that the Nusselt  number can be localised near the boundary,
\begin{equation}\label{1}
\Nu \le \frac1{\delta}\int_0^{\delta} \la wT \ra\, \dd z +\frac1{\delta},
\end{equation}
where $\delta\in(0,1)$ is an arbitrary number. We  will eventually chose $\delta$ optimally in order to balance both terms in \eqref{1}. Physically, this optimal $\delta$ corresponds to the  size of the thermal boundary layers, in which the heat drop is essentially conductive \cite{Seis13}. The localisation strategy in \eqref{1} can be adapted to estimate the dissipation in a number of fluid problems including channel or pipe flows or further problems of thermal convection \cite{Seis15}.

We remark further    that because $w$ is mean-free by the incompressibility \eqref{19} and the no-flux boundary condition in \eqref{22}, we may always replace $T$ in the Nusselt number bound \eqref{1} by the mean-free temperature
\[
\theta = T - \avint_{[0,L]^2} T\,\dd y,
\]
so that $\la wT\ra =\la w\theta\ra$.

Our bounds on the convective term in \eqref{1} rely mostly on the analysis of the following fourth order elliptic  problem for the vertical velocity
\begin{equation}
\label{35}
\Delta^2 w = -\Delta_y \theta\quad \mbox{in }[0,L]^2\times [0,H],
\end{equation}
which can be derived from the Stokes equations \eqref{19}, \eqref{20} by differentiation. Using the incompressibility \eqref{20}, the boundary conditions
 \eqref{22a} can be rewritten  as
 \begin{equation}\label{22}
w=0,\, \partial_z w= \sigma \partial_z^2w\quad\mbox{on }\{z=0\},\qquad w=0,\, \partial_z w=-\sigma \partial_z^2 w\quad \mbox{on }\{z=H\}.
\end{equation}
Both \eqref{35} and \eqref{22} completely determine  $w$.  Moreover, we use the fact that $\theta$ satisfies 
\begin{equation}\label{17}
\|\theta\|_{L^{\infty}}\le 1
\end{equation}
by the maximum principle for the advection-diffusion equation \eqref{18} (at least if this bound is satisfied by the initial datum), and the Nusselt number can be expressed in terms of the thermal dissipation,
\begin{equation}
\label{23}
\Nu  = \int_0^H \la |\grad T|^2 \ra\, \dd z.
\end{equation}
The latter can be seen by simply testing the advection-diffusion equation \eqref{18} with the temperature $T$, and using the fact the Nusselt number is constant on every horizontal slice
\[
\Nu = \la w\theta - \partial_z T\ra (z),
\]
for any $z\in[0,H]$, so that $\Nu = -\la \partial_z|_{z=0} T\ra$ in particular. We remark that the localisation in \eqref{1} is a consequence of the previous observation and both the maximum principle \eqref{17} and the nonnegativity of the temperature.

We start addressing the bound in \eqref{15}.

\subsection{Optimal bound in the  no-slip and Navier-slip regimes}

In order to stress the dependence on the slip length, which is crucial in the Navier-slip regime, we write $w_{\sigma}=w$ in the following. 

Our aim is to compare the actual velocity with the no-slip velocity field that approximates $w_{\sigma}$ in the regime $\sigma\ll1$, that is, we consider
\begin{equation}\label{7}
\laplace^2 w_0 = -\laplace_y \theta
\end{equation}
in the layer $[0,L]^2\times [0,H]$, and we suppose that $w_0$ has the no-slip boundary conditions
\begin{equation}\label{8}
w_0=\partial_z w_0=0\quad\mbox{on }\{z=0,H\}.
\end{equation}
The main result by Chanillo and Malchiodi \cite{ChanilloMalchiodi24} shows that the associated flux quantity is uniformly bounded in any boundary layer of order-one thickness. More precisely, they show that for any $\theta$ satisfying the maximum principle \eqref{17} and for any $w_0$ solving the inhomogeneous boundary value problem \eqref{7}, \eqref{8}, it holds that
\begin{equation}
\label{24}
|\la w_0\theta\ra|\lesssim 1\quad\mbox{for any }z\in [0,1],
\end{equation}
provided that $H$ is sufficiently large. Exploiting this observation, it is enough to further bound the correction in 
 \begin{equation}\label{29}
\Nu  \lesssim \left| \frac1{\delta}\int_0^{\delta}\la (w_{\sigma}-w_0)\theta\ra\, \dd z \right|+ \frac1{\delta}.
\end{equation}

Inspired by the analysis in \cite{ChanilloMalchiodi24}, we will decompose the limiting vertical velocity into upper and lower contributions: We let $v_0$ denote the solution to the truncated problem
\begin{equation}\label{40}
\laplace^2 v_0 = -\chi_{[0,\frac{H}2]}\laplace_y\theta,
\end{equation}
together with the Dirichlet and Neumann boundary conditions in \eqref{8}.
We then   decompose the correction function into  
\begin{equation}\label{42}
w_{\sigma}-w_0 = h_{\sigma} + g_{\sigma},
\end{equation}
where $h_{\sigma}$ solves the bi-Laplace problem
\begin{equation}
\label{2}
\Delta^2 h_{\sigma}=0
\end{equation}
inside of the domain, and the boundary conditions
\begin{equation}
\label{31}
h_{\sigma}=0,\, \partial_z h_{\sigma} = \mp\sigma\left( \partial_z^2 h_{\sigma} + \partial_z^2 v_0\right)\quad \mbox{on }\left\{z=\frac{H}2\pm\frac{H}2\right\},
\end{equation}
where the minus sign occurs at the upper boundary, and $g_{\sigma}$ solves the analogous problem for the upper half, that is
\begin{equation}
\label{32b}
\Delta^2 g_{\sigma}=0
\end{equation}
inside of the domain, and the boundary conditions
\begin{equation}
\label{3}
g_{\sigma}=0,\, \partial_z g_{\sigma} = \mp\sigma\left( \partial_z^2 g_{\sigma} + \partial_z^2 (w_0-v_0)\right)\mbox{on }\left\{z=\frac{H}2\pm\frac{H}2\right\}.
\end{equation}

\medskip

\noindent
\textbf{The correction term generated by $h_{\sigma}$.}
We consider the associated problem on the half-space: Denoting by $\tilde \theta$ the   truncation of $\theta$, 
\[
\tilde \theta (x) = \chi_{[0,\frac{H}2]}(z)\theta(x),
\]
and extending this function periodically in $y$, we consider 
\[
\tilde h_{\sigma}(x) = \int_{\R^3_+}B_{\sigma}(x,\tilde x)\tilde\theta(\tilde x)\,\dd \tilde x,
\]
where the kernel $ B_{\sigma}(x,\tilde x)$ solves the boundary value problem
\begin{align*}
\Delta_x^2 B_{\sigma}(\cdot ,\tilde x) &=0\quad\mbox{in }\R^3_+,\\
B_{\sigma}(\cdot,\tilde x)  &=0\quad\mbox{on }\partial \R^3_+,\\
\partial_z B_{\sigma}(\cdot,\tilde x) &= \sigma\left(\partial_z^2 B_{\sigma}(\cdot ,\tilde x)+\partial_z^2 K_0(\cdot ,\tilde x\right))\quad\mbox{on }\partial\R^3_+.
\end{align*}
Here $K_0(x,\tilde x)$ is the kernel associated to $w_0$, extended to the half space. It was established in \cite{ChanilloMalchiodi24} that this kernel is related to the Poisson kernel 
\[
P_s(y) = \frac1{2\pi} \frac{s}{(s^2+|y|^2)^{\frac32}}.
\]
 More specifically, with regard to Remark 2.7 in \cite{ChanilloMalchiodi24}, we have that
\begin{equation}\label{60}
\partial_z^2 K_0(x ,\tilde x) = \tilde z\Delta_{y} P_{\tilde z}(y-\tilde y)\quad \mbox{for any }x=(y,0)\in \partial \R^3_+.
\end{equation}
Thanks to the symmetry in horizontal direction, we have and write (slightly abusing notation),
\[
B_{\sigma}(x,\tilde x) = B_{\sigma}((y-\tilde y,z),(0,z)) = B_{\sigma}(y-\tilde y,z,\tilde z).
\]

Horizontally Fourier transforming the kernel problem, we obtain an ODE for the Fourier transform $
\widehat{B}_{\sigma}(\xi,z,\tilde z) $
that can be solved explicitly: Using that the  Fourier transform of the Poisson kernel is well-known, $\widehat{P}_{s}(\xi) = \frac1{2\pi} e^{-s|\xi|}$, see, for instance Exercise 2.2.11 in \cite{Grafakos14}, we find
\begin{align*}
\widehat{B}_{\sigma} (\xi,z,\tilde z) = - \frac{\sigma z\tilde z}{2\pi}\frac{|\xi|^2}{1+2\sigma |\xi|} e^{-(z+\tilde z)|\xi|} 
 = -\frac{\sigma z\tilde z}{2\pi} |\xi|^2 e^{-(z+\tilde z)|\xi|} p_{\sigma}(\xi),
\end{align*}
where $p_{\sigma}(\xi) = (1+2\sigma|\xi|)^{-1}$ is an order-zero symbol. Denoting by $D_{\sigma}=p_{\sigma}(\grad_y)$ the associated pseudo-differential operator,
transformation back into physical variables gives
\[
B_{\sigma} (y,z,\tilde z)  =\sigma z\tilde z D_{\sigma}\laplace_{ y} P_{z+\tilde z}(y)  ,
\]
so that
\[
\tilde h_{\sigma}(x) = \sigma z\int_0^{\infty}\int_{\R^2} \tilde z \Delta_y P_{z+\tilde z}(y-\tilde y) (D_{\sigma}\tilde \theta)(\tilde y,\tilde z)\,\dd \tilde y\dd\tilde z.
\]

Next, we derive decay estimates for the kernel $B_{\sigma}$.
\begin{lemma}\label{L8}
It holds that
\begin{align*}
|\grad_y(-\Delta_y)^{-1} B_{\sigma}(y,z,\tilde z)& \lesssim \frac{\sigma z\tilde z}{(z+\tilde z+| y|)^3},\\
|B_{\sigma}(y,z,\tilde z)|  & \lesssim \frac{\sigma z\tilde z}{(z+\tilde z+| y|)^4},\\
|\grad_y B_{\sigma}(y,z,\tilde z)|+|\partial_z B_{\sigma}(y,z,\tilde z)| &\lesssim \frac{\sigma \tilde z}{(z+\tilde z+| y|)^4},\\
|\Delta_y B_{\sigma}(y,z,\tilde z)|+|\partial_z^2 B_{\sigma}(y,z,\tilde z)| &\lesssim \frac{\sigma \tilde z}{(z+\tilde z+| y|)^5}.
\end{align*}
\end{lemma}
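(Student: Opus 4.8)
The plan rests on the explicit formula $B_{\sigma}(y,z,\tilde z)=\sigma z\tilde z\, D_{\sigma}\Delta_{y}P_{z+\tilde z}(y)$ established above, whose only non-elementary ingredient is the zeroth-order operator $D_{\sigma}$ with symbol $p_{\sigma}(\xi)=(1+2\sigma|\xi|)^{-1}$. Rather than estimating the convolution kernel of $D_{\sigma}$ directly, which is cumbersome and has a $\sigma$-dependent profile, I would \emph{subordinate} $D_{\sigma}$ to the Poisson semigroup: from the elementary identity
\[
(1+2\sigma|\xi|)^{-1}=\int_{0}^{\infty}e^{-u}\,e^{-2\sigma u|\xi|}\,\dd u
\]
together with $\widehat{P}_{t}(\xi)=\frac1{2\pi}e^{-t|\xi|}$, it follows that $D_{\sigma}g=\int_{0}^{\infty}e^{-u}\,(P_{2\sigma u}\ast g)\,\dd u$, the convolution being taken in the horizontal variable. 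Since $\grad_{y}$ and $\Delta_{y}$ commute with that convolution and the Poisson kernel satisfies the semigroup law $P_{a}\ast P_{b}=P_{a+b}$, this produces the clean representation
\[
B_{\sigma}(y,z,\tilde z)=\sigma z\tilde z\int_{0}^{\infty}e^{-u}\,\Delta_{y}P_{z+\tilde z+2\sigma u}(y)\,\dd u ,
\]
which I would record as the starting point; it reduces to $\sigma z\tilde z\,\Delta_{y}P_{z+\tilde z}(y)$ as $\sigma\to0$, consistent with $D_{\sigma}\to\id$.

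The second ingredient is the scaling of the Poisson kernel: from $P_{\lambda t}(\lambda y)=\lambda^{-2}P_{t}(y)$ one sees that each derivative $\grad_{y}^{\alpha}\partial_{t}^{j}P_{t}(y)$ is jointly homogeneous of degree $-2-|\alpha|-j$ in $(y,t)$ and smooth away from the origin, hence $|\grad_{y}^{\alpha}\partial_{t}^{j}P_{t}(y)|\lesssim(t+|y|)^{-2-|\alpha|-j}$ for $t>0$. With this in hand every bound in the lemma follows by differentiating the integral representation under the integral sign and using $\int_{0}^{\infty}e^{-u}\,\dd u=1$ together with $z+\tilde z+2\sigma u+|y|\ge z+\tilde z+|y|$. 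The second estimate uses $|\Delta_{y}P_{t}|\lesssim(t+|y|)^{-4}$ directly. For the first estimate one notes $(-\Delta_{y})^{-1}\Delta_{y}P_{t}=-P_{t}$, which turns the integrand into $\grad_{y}P_{z+\tilde z+2\sigma u}$, and then $|\grad_{y}P_{t}|\lesssim(t+|y|)^{-3}$ closes it. For the third and fourth estimates one differentiates the representation once, resp. twice, in $\grad_{y}$ and $\partial_{z}$; every resulting term is a prefactor (namely $\sigma z\tilde z$, $\sigma\tilde z$, or one of these with an extra explicit factor $z$ produced by $\partial_{z}^{2}$) times an integral of $e^{-u}\,\grad_{y}^{\alpha}\partial_{t}^{j}\Delta_{y}P_{t}$ with $|\alpha|+j$ equal to the order of differentiation, so that the homogeneity bound and the inequalities $z,\tilde z\le z+\tilde z+|y|$ yield precisely the claimed $(z+\tilde z+|y|)^{-4}$, resp. $(z+\tilde z+|y|)^{-5}$ decay with the single surviving prefactor $\sigma\tilde z$.

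The step I would single out as the crux is the subordination: it trades a non-smooth, $\sigma$-dependent kernel for an average over the explicit, smooth Poisson kernels, after which the entire lemma reduces to homogeneity of $P_{t}$ and a trivial $u$-integral. In particular, uniformity in $\sigma$ comes for free, because the subordination weight $e^{-u}\,\dd u$ is a probability measure independent of $\sigma$, so no estimate from pseudodifferential calculus or oscillatory integrals is needed. The only remaining work is routine: the Leibniz bookkeeping for $\partial_{z}$ and $\partial_{z}^{2}$, and the standard justification of differentiation under the integral, for which one observes that, at any point with $z+\tilde z+|y|>0$, the integrand and all the derivatives in play are dominated by $C\,e^{-u}$ uniformly for $u$ in compacta, with integrable decay as $u\to\infty$.
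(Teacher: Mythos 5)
Your proposal is correct, but it reaches the kernel bounds by a genuinely different route than the paper. The paper works entirely on the Fourier side: it gathers all quantities in the lemma under the family of multipliers $m^j(\xi)=\frac{1}{2\pi}\frac{|\xi|^{j+1}}{1+2\sigma|\xi|}e^{-(z+\tilde z)|\xi|}$, gets the near-field bound $(z+\tilde z)^{-(j+3)}$ from a crude $L^1$ estimate of the inverse transform, and obtains the far-field decay $|y|^{-(j+3)}$ via a Mikhlin--H\"ormander-type argument: dyadic localisation $m^j_k=\psi(2^{-k}\cdot)\,m^j$, the bounds $|y^{\gamma}K^j_k(y)|\le\|\partial_\xi^{\gamma}m^j_k\|_{L^1}\lesssim 2^{(j+3-|\gamma|)k}$, and summation over $k$. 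You instead eliminate the only awkward factor, the symbol $p_\sigma(\xi)=(1+2\sigma|\xi|)^{-1}$, by subordination to the Poisson semigroup, which turns $B_\sigma$ into $\sigma z\tilde z\int_0^\infty e^{-u}\,\Delta_y P_{z+\tilde z+2\sigma u}(y)\,\dd u$; after that every estimate follows from the joint homogeneity bound $|\grad_y^{\alpha}\partial_t^{j}P_t(y)|\lesssim (t+|y|)^{-2-|\alpha|-j}$, the inequality $z+\tilde z+2\sigma u+|y|\ge z+\tilde z+|y|$, the absorption $z\le z+\tilde z+|y|$ where a prefactor $z$ must be traded for one power of decay, and $\int_0^\infty e^{-u}\,\dd u=1$. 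Your bookkeeping for the vertical derivatives (the $\sigma\tilde z$ cross terms versus the $\sigma z\tilde z$ terms carrying an extra $\partial_t$) reproduces exactly the paper's splitting $\partial_z^2\widehat B_\sigma=2\sigma\tilde z\,m^2-\sigma z\tilde z\,m^3$, so the stated prefactors and exponents come out correctly, including the first estimate via $(-\Delta_y)^{-1}\Delta_yP_t=-P_t$. What each approach buys: yours is more elementary and self-contained (no dyadic decomposition or multiplier calculus; uniformity in $\sigma$ is automatic because the subordination weight $e^{-u}\,\dd u$ is a $\sigma$-independent probability measure, and the retained shift $2\sigma u$ could even be exploited for sharper decay in $\sigma$), while the paper's multiplier argument is the more robust template, applicable to order-zero symbols that are not Laplace transforms of positive measures and hence not amenable to subordination.
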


\begin{proof}
We establish all estimates simultaneously by considering the multipliers
\[
m^j(\xi) =  \frac1{2\pi}\frac{|\xi|^{j+1}}{1+2\sigma |\xi|} e^{-(z+\tilde z)|\xi|}
\]
for $j\in\N_0$, so that 
\begin{gather*}
\widehat{\grad_y(-\Delta_y)^{-1} B}_{\sigma}(\xi)=  - \sigma z\tilde z   m^0(\xi),\quad 
 \widehat{B}_{\sigma} (\xi) =  - \sigma z\tilde z   m^1(\xi) ,\\
   \partial_z \widehat{B}_{\sigma}(\xi) =-\sigma \tilde z m^1(\xi) +\sigma z\tilde zm^2(\xi),\quad \partial_z^2 \widehat{B}_{\sigma} (\xi)=2\sigma \tilde z m^2(\xi) - \sigma z\tilde z m^3(\xi),\\
   \widehat{\grad_y B}_{\sigma} (\xi)= -i\sigma z\tilde z \xi m^1(\xi),\quad \Delta_y \widehat{B}_{\sigma}(\xi) =\sigma z \tilde z m^3(\xi).
  \end{gather*}
We have to study the multipliers.

First, we notice that
\begin{align*}
|\widecheck{m}^j(y)| &=\frac{1 }{(2\pi)^2} \left| \int_{\R^2}e^{i\xi\cdot y} \frac{|\xi|^{j+1}}{1+2\sigma|\xi|} e^{-|\xi|(z+\tilde z)}\, \dd \xi\right|\\
&\lesssim    \int |\xi|^{j+1} e^{-|\xi|(z+\tilde z)}\, \dd \xi\\
&\lesssim \frac{1}{(z+\tilde z)^{j+3}}.
\end{align*}

For  $|y|\gg z+\tilde z$, we get better bounds by applying a Mikhlin--H\"ormander-type argument: We localise the Fourier multiplier $m^j$ in Fourier space by setting
\[ 
m_k^j(\xi) = \psi(2^{-k}\xi)m^j(\xi),
\]
where $\psi$ is a nonnegative cut-off function supported on the annulus $1/2\le |\xi|\le 2$ and generating a dyadic partition of unity,
\[
\sum_{k=-\infty}^{\infty} \psi(2^{-k}\xi)=1,
\]
for any $\xi\not=0$. 
It is readily checked that
\[
|\partial_{\xi}^{\gamma} m_k^j(\xi)| \lesssim    2^{(j+1-|\gamma|)k} ,
\]
for any multi-index $\gamma\in\N_0^2$. In particular, we have the bound
\[
\|\partial_{\xi}^{\gamma} m_k^j \|_{L^1(\R^2)} \lesssim  2^{(j+3-|\gamma|)k} ,
\]
in view of the support of the multiplier. Therefore, using elementary properties of the Fourier transform, we find for $K^j_k = \widecheck{m}_k^j$ that
\[
|y^{\gamma} K_k^j(y)|  = |\widecheck{\partial_{\xi}^{\gamma} m_k^j}(y)| \le \|\partial_{\xi}^{\gamma} m_k^j\|_{L^1(\R^2)} \lesssim 2^{(j+3-|\gamma|)k}.
\]
In particular, for $m=|\gamma|$, this estimate leads to 
\[
|y|^m |K_k^j(y)| \lesssim  2^{(j+3-m)k}.
\]
Summation  over $k$, using that $\widecheck{m^j}(y) = \sum_{k} K_k^j(y)$ and choosing $m=0$ for small values of $2^k|y|$ and $m=j+4$ for large  values of $2^k|y|$ eventually gives
\begin{align*}
|\widecheck{m^j}(y)|\le \sum_{k=-\infty}^{\infty}|K_k^j(y)| & \le \sum_{2^k\le \frac1{|y|}}|  K_k^j(y)|+\sum_{2^k\ge \frac1{|y|}}|  K_k^j(y)|\\
&\lesssim  \sum_{2^k\le \frac1{|y|}} 2^{(j+3)k }  + \frac{1}{|y|^{j+4}} \sum_{2^k\ge \frac1{|y|}}   2^{-k}\lesssim \frac{1}{|y|^{j+3}}.
\end{align*}

We combine both estimates to deduce 
\[
|\widecheck{m^j}(y)|\lesssim \frac1{(z+\tilde z +|y|)^{j+3}}.
\]
The bounds on $B_{\sigma}$, $\partial_z B_{\sigma}$, $\partial_z^2 B_{\sigma}$ and $\Delta_y B_{\sigma}$ now follow immediately. The remaining estimates for the terms that involve horizontal gradients are obtained analogously.
\end{proof}

 For later reference, we derive the following estimates from the previous bound on the kernel.
 \begin{lemma}\label{L13}
The following estimates are true:
\[
\sup_{[H-1,H]} \la |\grad  \tilde h_{\sigma}|^2\ra \lesssim  \sigma^2, \quad \sup_{[H-1,H]} \la |\grad_y^2 \tilde h_{\sigma}|^2\ra \lesssim \frac{\sigma^2}{H^2}.
\]
\end{lemma}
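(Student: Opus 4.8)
The plan is to estimate the Sobolev norms of $\tilde h_\sigma$ on the upper boundary strip $[H-1,H]$ directly from the kernel representation
\[
\tilde h_\sigma(x) = \int_0^\infty\int_{\R^2} B_\sigma(y-\tilde y,z,\tilde z)\,\tilde\theta(\tilde y,\tilde z)\,\dd\tilde y\,\dd\tilde z,
\]
using the decay bounds of Lemma \ref{L8} together with the fact that $\tilde\theta$ is supported in $\{\tilde z\le H/2\}$ and is bounded by $1$ in absolute value (by \eqref{17}). The crucial geometric input is that if $z\in[H-1,H]$ and $\tilde z\le H/2$, then $z+\tilde z\gtrsim H$, so the kernel and its derivatives enjoy substantial gain from the large vertical separation. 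First I would observe that $\grad\tilde h_\sigma = (\grad_y\tilde h_\sigma,\partial_z\tilde h_\sigma)$, so I need pointwise bounds on $\grad_y\tilde h_\sigma$ and $\partial_z\tilde h_\sigma$; by Lemma \ref{L8} both are controlled by integrating $\sigma\tilde z/(z+\tilde z+|y-\tilde y|)^4$ against $|\tilde\theta|\le\chi_{\{\tilde z\le H/2\}}$.

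Next I would carry out the integration. Fix $z\in[H-1,H]$. For the horizontal integral, $\int_{\R^2}(z+\tilde z+|y-\tilde y|)^{-4}\,\dd\tilde y \lesssim (z+\tilde z)^{-2}$. For the vertical integral, since $z\ge H-1$ and $\tilde z\in[0,H/2]$ we have $z+\tilde z\ge H-1\gtrsim H$ (using $H\gg1$), hence $\int_0^{H/2}\sigma\tilde z\,(z+\tilde z)^{-2}\,\dd\tilde z \lesssim \sigma H^{-2}\int_0^{H/2}\tilde z\,\dd\tilde z \lesssim \sigma H^{-2}\cdot H^2 = \sigma$. This gives the pointwise bound $|\grad\tilde h_\sigma(x)|\lesssim\sigma$ uniformly on $[H-1,H]\times[0,L]^2$, whence $\sup_{[H-1,H]}\la|\grad\tilde h_\sigma|^2\ra\lesssim\sigma^2$. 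For the second estimate I would note that each horizontal derivative of $B_\sigma$ costs one extra power in the denominator: by Lemma \ref{L8}, $|\grad_y^2 B_\sigma|\lesssim|\Delta_y B_\sigma|$ (modulo the usual Riesz-transform/multiplier bookkeeping, which can be absorbed since $p_\sigma$ is an order-zero symbol uniformly in $\sigma$) is bounded by $\sigma\tilde z/(z+\tilde z+|y|)^5$. Repeating the same computation, the horizontal integral now gives $(z+\tilde z)^{-3}$ and the vertical integral gives $\sigma\int_0^{H/2}\tilde z(z+\tilde z)^{-3}\,\dd\tilde z\lesssim\sigma H^{-3}\cdot H^2 = \sigma/H$, so $|\grad_y^2\tilde h_\sigma|\lesssim\sigma/H$ pointwise, hence $\sup_{[H-1,H]}\la|\grad_y^2\tilde h_\sigma|^2\ra\lesssim\sigma^2/H^2$, as claimed.

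The only mildly delicate point — and the one I would flag as the main obstacle — is the passage from the kernel bounds stated for the \emph{half-space} function $\tilde h_\sigma$ (built from the half-space kernel $B_\sigma$ and the periodic-in-$y$ extension of $\tilde\theta$) to honest estimates of the relevant object: one must justify interchanging the $\R^2$-convolution picture with the $[0,L]^2$-periodic picture, i.e. periodizing the kernel in $y$. This is harmless because the decay in Lemma \ref{L8} is integrable at spatial infinity (rate $|y|^{-3}$ or faster, and in fact $|y|^{-4},|y|^{-5}$ for the relevant terms), so the periodized kernel $\sum_{n\in\Z^2}B_\sigma(y+Ln,z,\tilde z)$ converges absolutely and inherits the same bounds up to constants depending only on $L$ through a convergent lattice sum; since we ultimately take $L=\Ra^{1/3}\ell$ large the sum is dominated by the $n=0$ term. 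The other bookkeeping issue is the factor $\grad_y^2$ versus $\Delta_y$ in the second estimate: one replaces $\grad_y^2(-\Delta_y)^{-1}$ acting on $B_\sigma$ by the corresponding Riesz-type operator, whose symbol $\xi\otimes\xi/|\xi|^2$ combined with the order-zero symbol $p_\sigma$ stays uniformly bounded in $\sigma$, so the Mikhlin–Hörmander argument already used in the proof of Lemma \ref{L8} applies verbatim to yield the same $|y|^{-5}$ decay. With these two routine verifications in place, the two displayed estimates follow from the elementary integrations above.
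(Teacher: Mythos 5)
Your argument is correct and essentially the paper's own: pointwise bounds on $\grad \tilde h_{\sigma}$ and $\grad_y^2 \tilde h_{\sigma}$ obtained by integrating the kernel decay of Lemma \ref{L8} against $|\tilde\theta|\le \chi_{\{\tilde z\le H/2\}}$, using that $z+\tilde z\gtrsim H$ on the strip $[H-1,H]$, with the same horizontal/vertical integration yielding $\sigma$ and $\sigma/H$ respectively. The only (harmless) deviations are that the paper treats the second estimate via the identity $\la |\grad_y^2\tilde h_{\sigma}|^2\ra = \la (\Delta_y\tilde h_{\sigma})^2\ra$ (integration by parts in the horizontal average), so the listed bound on $\Delta_y B_{\sigma}$ suffices without re-running the Mikhlin--H\"ormander argument for the matrix symbol $\xi\otimes\xi\,|\xi|^2 p_{\sigma}(\xi)e^{-(z+\tilde z)|\xi|}$ as you propose (your route works too), and your periodization concern is moot because $\tilde h_{\sigma}$ is by definition the integral of $B_{\sigma}$ against the periodically extended $\tilde\theta$ over the whole half-space, which is exactly the quantity you estimate.
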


\begin{proof}
We provide the argument exemplified in the first order vertical derivative term. The bounds on all other derivatives  are obtained similarly. Actually, we derive a  stronger pointwise bound. Using the kernel estimates from Lemma \ref{L8}, the temperature bound \eqref{17} and the definition of the truncation, we have that
\begin{align*}
|\grad  \tilde h_{\sigma} (x)|& \le \int_0^{H} \int_{\R^2} |\grad_y B_{\sigma} (\tilde y,z,\tilde z)|+|\partial_z B_{\sigma} (\tilde y,z,\tilde z)|\, \dd \tilde y\dd \tilde z\\
&\lesssim \sigma \int_0^H \int_{B_{z+\tilde z}(0)} \frac{\tilde z}{(z+\tilde z)^4}\,\dd\tilde y\dd \tilde z + \sigma \int_0^H \int_{\R^2\setminus B_{z+\tilde z}(0)} \frac{\tilde z}{|\tilde y|^4}\,\dd\tilde y\dd \tilde z\\
&\lesssim \sigma \int_0^H \frac{\tilde z}{(z+\tilde z)^2}\, \dd \tilde z \lesssim \frac{\sigma}{H^2}\int_0^H\tilde z\, \dd\tilde z \lesssim \sigma,
\end{align*}
where we have used that $z\gtrsim H$.

For the second-order horizontal derivatives proceeds similarly, we use  the observation that $\la |\grad_y^2 \tilde h_{\sigma}|^2\ra  = \la (\Delta_y \tilde h_{\sigma})^2\ra$, as can be seen via an integration by parts. Then, the last estimate in Lemma \ref{L8} becomes available and we can argue very similarly to the previous bound.
\end{proof}

Estimating the singular part of $\tilde h_{\sigma}$ is rather subtle. Our argument follows the one of Proposition 3.2 in \cite{ChanilloMalchiodi24}, which has to be adapted to our setting.

\begin{prop}
\label{P2}
For $H\gg1$, there is the estimate
\[
\left|\avint_{[-\frac{L}2,\frac{L}2]^2}   \tilde h_{\sigma}\theta \,\dd y\right| \lesssim \sigma z,
\]
for any $z\in [0,1]$. In particular, it is true that
\[
\left|\frac1{\delta} \int_0^{\delta}\la \tilde h_{\sigma} \theta\ra\, \dd z\right| \lesssim \sigma \delta,
\]
for any $\delta\in(0,1]$.
\end{prop}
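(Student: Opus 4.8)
The goal is to show that the flux $\avint \tilde h_\sigma \theta\,\dd y$ is controlled by $\sigma z$ in the near-wall slab $z\in[0,1]$. Recall the explicit representation
\[
\tilde h_\sigma(x) = \sigma z\int_0^\infty\!\!\int_{\R^2} \tilde z\,\Delta_y P_{z+\tilde z}(y-\tilde y)\,(D_\sigma\tilde\theta)(\tilde y,\tilde z)\,\dd\tilde y\,\dd\tilde z,
\]
so the prefactor $\sigma z$ is already visible; the whole point is that the remaining integral operator, applied to the bounded function $\tilde\theta$ (with $\|\tilde\theta\|_{L^\infty}\le1$ by the maximum principle \eqref{17}), produces something \emph{uniformly bounded} after integration against $\theta$ over the horizontal cell. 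The plan is to follow the scheme of Proposition~3.2 in \cite{ChanilloMalchiodi24}: split the $\tilde z$-integral into a ``far'' part $\tilde z\gtrsim 1$ and a ``near'' part $\tilde z\lesssim 1$. For the far part, the kernel bound $|B_\sigma(y,z,\tilde z)|\lesssim \sigma z\tilde z/(z+\tilde z+|y|)^4$ from Lemma~\ref{L8} is integrable in both $\tilde y$ and $\tilde z$ on $\{\tilde z\gtrsim1\}$ (the $\tilde z$-integral of $\tilde z/(z+\tilde z)^{2}$ over $[1,H/2]$ costs only a $\log$, which one absorbs, or one uses that for $z\le1$ the bound $\tilde z/(z+\tilde z)^2\lesssim 1/\tilde z$ makes it $\log H$ — so one must be slightly more careful here and use the extra gradient structure, writing $\tilde h_\sigma=-\Delta_y(-\Delta_y)^{-1}\tilde h_\sigma$ and integrating by parts to land on the better-decaying kernel $\grad_y(-\Delta_y)^{-1}B_\sigma$, whose $\tilde z$-decay is one power stronger). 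After this integration by parts onto $\grad_y\theta$ or a second derivative onto $\theta$, the far contribution is $\lesssim\sigma z$ with no logarithm.

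The delicate part is the near-field $\tilde z\lesssim 1$ together with $z\lesssim1$, where the kernel is genuinely singular and the naive $L^\infty$ estimate on $\tilde\theta$ is not enough to beat the $|y|^{-4}$ singularity integrated against $\theta$. Here I would reproduce the Chanillo–Malchiodi device: exploit that $\Delta_y P_{z+\tilde z}$ has horizontal mean zero (it is a horizontal Laplacian of an $L^1$ function), so one can replace $\tilde\theta(\tilde y,\tilde z)$ — after moving $D_\sigma$ around, using that $D_\sigma$ is an order-zero operator with a nonnegative, $L^1$-normalised, radially decreasing convolution kernel (this should be checked: $p_\sigma(\xi)=(1+2\sigma|\xi|)^{-1}$ is a Bessel-type symbol whose inverse Fourier transform is a probability density, so $D_\sigma$ is an $L^\infty\to L^\infty$ contraction) — by its oscillation $\tilde\theta(\tilde y,\tilde z)-\tilde\theta(y,\tilde z)$, gaining one factor of $|\tilde y-y|$, or alternatively integrate by parts once in $\tilde y$ to put a derivative on $D_\sigma\tilde\theta$ and use a one-sided trace/gradient control of $\theta$ near the wall. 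Pairing with $\theta(y,z)$ and integrating in $y$ then yields a convergent integral of size $O(1)$, i.e. the full quantity is $\lesssim \sigma z$. One has to keep track that the truncation $\chi_{[0,H/2]}$ introduces a jump at $\tilde z=H/2$, but since we are looking at $z\le1\ll H/2$ and the kernel decays like $(z+\tilde z)^{-4}$, that jump contributes at scale $\tilde z\sim H/2$ and is harmless (bounded by $\sigma z\cdot H\cdot H/(H)^4\to0$).

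The second assertion is then immediate: integrate the pointwise-in-$z$ bound $|\avint \tilde h_\sigma\theta\,\dd y|\lesssim\sigma z$ against $\dd z$ over $[0,\delta]$ and divide by $\delta$, giving $\delta^{-1}\int_0^\delta \sigma z\,\dd z = \sigma\delta/2\lesssim\sigma\delta$; since this holds for the horizontal average before taking $\la\cdot\ra$, and $\la\cdot\ra$ is an average in the remaining (time) variable of a quantity already bounded uniformly, the bound passes to $\la\tilde h_\sigma\theta\ra$ as well. The main obstacle, as in \cite{ChanilloMalchiodi24}, is the near-field cancellation step: making precise which cancellation (mean-zero of $\Delta_y P_s$, or an integration by parts combined with a boundary-layer trace estimate on $\theta$) is strong enough to kill the $|y|^{-4}$ singularity while not costing more than the available $\sigma z$ — and verifying that the pseudodifferential operator $D_\sigma$, which is new compared to the no-slip case, commutes through these manipulations without damage because its convolution kernel is a nonnegative approximate identity.
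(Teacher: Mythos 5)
Your overall framing is right: the prefactor $\sigma z$ is explicit in the representation of $\tilde h_{\sigma}$, the truncation at $\tilde z=H/2$ and the second assertion are indeed harmless, and $D_{\sigma}$ is an $L^{\infty}$-contraction (by subordination, $p_{\sigma}(\xi)=\int_0^{\infty}e^{-t}e^{-2\sigma t|\xi|}\,\dd t$, so its kernel is a nonnegative superposition of Poisson kernels of total mass one). But the heart of the proposition is not addressed correctly. Using only $\|\theta\|_{L^{\infty}}\le 1$, the scale-by-scale estimate of the remaining pairing costs $\int_0^H\tilde z\,(z+\tilde z)^{-2}\,\dd\tilde z\sim\log(H/z)$, i.e.\ each dyadic block in $\tilde z$ contributes $O(1)$ and there are $\log H$ blocks; the entire difficulty is to sum the scales without this logarithm. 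Both mechanisms you propose to do this require quantitative regularity of $\theta$ that is simply not available: (i) integrating by parts to land derivatives on $\theta$ (whether $\grad_y\theta$ or $\Delta_y\theta$) cannot be closed with the maximum principle alone --- gradient control of $\theta$ exists only in the integrated form $\int_0^H\la|\grad T|^2\ra\,\dd z=\Nu$, so this route reintroduces the Nusselt number and becomes circular (or, after a buckling argument, logarithmic, which is exactly what this proposition is designed to avoid); moreover your claim that $\grad_y(-\Delta_y)^{-1}B_{\sigma}$ decays one power \emph{better} is backwards, since Lemma \ref{L8} gives decay $(z+\tilde z+|y|)^{-3}$ versus $(z+\tilde z+|y|)^{-4}$ for $B_{\sigma}$ itself; (ii) the ``oscillation'' device, replacing $\tilde\theta(\tilde y,\tilde z)$ by $\tilde\theta(\tilde y,\tilde z)-\tilde\theta(y,\tilde z)$ to gain a factor $|\tilde y-y|$, gains nothing without a modulus of continuity for $\theta$, and no uniform H\"older or Lipschitz bound is at hand (nor is one true uniformly in time and $\Ra$).

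What the paper (following Proposition 3.2 of \cite{ChanilloMalchiodi24}) actually does at this point is different from the device you describe: after truncating horizontally and discarding far-field contributions by the kernel bounds of Lemma \ref{L8}, one uses the semigroup property $P_{z+\tilde z}=P_{\frac{z+\tilde z}{2}}\ast P_{\frac{z+\tilde z}{2}}$ to split $\Delta_yP_{z+\tilde z}$ into a product of two gradient-of-Poisson convolutions, one hitting $\zeta=\chi\theta$ and one hitting $D_{\sigma}\tilde\zeta$, and then controls the resulting multi-scale pairing $\int_0^{\infty}(\Psi\ast\rho_z)\cdot(\Psi\ast F_z)\,m_z(s)\,\frac{\dd s}{s}$ by a Coifman--Meyer-type bilinear estimate, $\|\rho_z\|_{\mathrm{BMO}}\,\|N(\Psi\ast F_z)\|_{L^2}$, with $N$ the non-tangential maximal function. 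This is precisely the harmonic-analysis input that sums over the vertical scales with only $L^{\infty}$ (hence BMO) control of $\theta$, and it is absent from your sketch; the maximal-function factor is then bounded by $L$ through elementary kernel estimates (including the bound \eqref{26} on $D_{\sigma}\grad_yP_s$), yielding $|\mathrm{II}|\lesssim\sigma z$. Without this (or an equivalent square-function/Carleson-measure argument), your near-field step does not close, so the proof as proposed has a genuine gap at its central point.
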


\begin{proof}Our argument analyses the elliptic problem defining $\tilde h_{\sigma}$. We can thus neglect the time dependence in the following.

We start by introducing the horizontally truncated functions
\[
\zeta = \chi_{[-\frac{L}2,\frac{L}2]^2}  \theta,\quad \tilde \zeta = \chi_{[-L,L]^2}\tilde \theta.
\]
The estimates on the kernels in Lemma \ref{L8} can be  exploited to split off a non-singular part from the extended correction function near the bottom boundary. More specifically, we write
\begin{align*}
\tilde h_{\sigma}(x)& = \sigma z \int_0^{\infty} \int_{\R^2} \tilde z\Delta_yP_{z+\tilde z}(y-\tilde y) D_{\sigma}\tilde \zeta\,\dd\tilde y\dd\tilde z \\
&\quad +\int_0^{\infty}\int_{\R^2\setminus [-L,L]^2} B_{\sigma}(y-\tilde y,z,\tilde z)\tilde \theta(\tilde y,\tilde z)\, \dd \tilde y\dd \tilde z.
\end{align*}
For the second term, we estimate with the help of the bounds from Lemma \ref{L8} and the maximum principle \eqref{17} for the temperature for any $y\in[-\frac{L}2,\frac{L}2]^2$
\begin{align*}
\left|\int_0^{\infty}\int_{\R^2\setminus [-L,L]^2} B_{\sigma}(y-\tilde y,z,\tilde z)\tilde \theta(\tilde y,\tilde z)\, \dd \tilde y\dd \tilde z\right|
&\lesssim \int_0^H \int_{\R^2\setminus[-\frac{L}2,\frac{L}2]^2} \frac{\sigma z\tilde z}{(z+\tilde z+|\tilde y|)^4}\,\dd\tilde y \\
&\lesssim \sigma z\frac{H}{L}.
\end{align*}
Because $H\sim L$ (in the sense that both have the same scaling behaviour in terms of $\Ra$), using the maximum principle for the temperature again, we then have
\begin{align*}
\avint_{[-\frac{L}2,\frac{L}2]^2}\theta \tilde h_{\sigma}\, \dd y  =\frac{\sigma z}{L^2}\int_{0}^H \tilde z \int_{\R^2}\int_{\R^2} \zeta(y,z)\Delta_y P_{z+\tilde z}(y-\tilde y) (D_{\sigma}\tilde \zeta)(\tilde y,\tilde z)\,\dd \tilde y \dd y\dd \tilde z + O(\sigma z).
\end{align*}

We now use the semi-group property of the Poisson kernel, $P_s = P_{\frac{s}2}\ast P_{\frac{s}2}$, its radial symmetry and an integration by parts to rewrite this identity as
\begin{align*}
\mel \avint_{[-\frac{L}2,\frac{L}2]^2}\theta \tilde h_{\sigma}\, \dd y \\
& =\frac{\sigma z}{L^2}\int_{0}^H \tilde z  \int_{\R^2} \left(\grad_y P_{\frac{z+\tilde z}2} \ast \zeta(\cdot,z)\right)\cdot \left(\grad_y P_{\frac{z+\tilde z}2} \ast (D_{\sigma}\tilde \zeta)(\cdot,\tilde z)\right) \dd y\dd \tilde z +O(\sigma z).
\end{align*}
We may once more split off a term that can be controlled by simple kernel estimates. For this purpose, we decompose the integral on the right-hand side 
\begin{align*}
\avint_{[-\frac{L}2,\frac{L}2]^2}\theta \tilde h_{\sigma}\, \dd y 
 & =  \mathrm{I}(z) + \mathrm{II} (z) +O(\sigma z),
\end{align*}
where we have introduced
\begin{align*}
\mathrm{I}(z) & = \frac{\sigma z}{L^2}\int_{0}^H \tilde z \int_{\R^2\setminus [-2L,2L]^2} \left(\grad_y P_{\frac{z+\tilde z}2} \ast \zeta(\cdot,z)\right)\cdot \left(D_{\sigma}\grad_y P_{\frac{z+\tilde z}2} \ast  \tilde \zeta (\cdot,\tilde z)\right) \dd y \dd \tilde z,\\
\mathrm{II}(z)& =  \frac{\sigma z}{L^2}\int_{0}^H \tilde z \int_{ [-2L,2L]^2} \left(\grad_y P_{\frac{z+\tilde z}2} \ast \zeta(\cdot,z)\right)\cdot \left(\grad_y P_{\frac{z+\tilde z}2} \ast (D_{\sigma}\tilde \zeta)(\cdot,\tilde z)\right)\dd y \dd \tilde z .
\end{align*}

To estimate the first term, we make use of the fact that the derivative of the Poisson kernel satisfies the bound
\begin{equation}
\label{26}
|\grad_y P_{\frac{z+\tilde z}2}(y)|+  |\grad_y D_{\sigma}P_{\frac{z+\tilde z}2}(y)| \lesssim \frac1{(s^2 +|y|^2)^{3/2}},
\end{equation}
as can be straightforwardly verified or follows from Lemma \ref{L8}, respectively, because $  D_{\sigma}\grad_y P_{\frac{z+\tilde z}2 }(y)= 4(\sigma z\tilde z)^{-1} \grad_y(-\Delta_y)^{-1} B_{\sigma}(y,z/2,\tilde z/2)$. For instance, using the temperature bound \eqref{17}, we have for any $y\not\in[-2L,2L]^2$ that
\[
\left|\left(\grad_y P_{\frac{z+\tilde z}2}\ast \zeta(\cdot,z)\right)(y)\right| \lesssim \int_{[-\frac{L}2,\frac{L}2]^2} \frac1{(|y-\tilde y|^2+\tilde z^2)^{\frac32}} \, \dd \tilde y \lesssim \frac{L^2}{(|y|+\tilde z)^3}.
\]
The same estimate applies to the analogous expression with $\tilde \zeta$. We use this information to derive that
\begin{equation}
\label{28}
\begin{aligned}
|\mathrm{I}(z)| & \lesssim \sigma z L^2 \int_{0}^H \int_{\R^2\setminus [-2L,2L]^2} \frac{ \tilde z}{(|y|+\tilde z)^6} \,\dd y\dd \tilde z \\
& \lesssim \sigma z L^2 \int_0^{\infty}\frac{\tilde z}{(L+\tilde z)^3}\,\dd \tilde z\int_{\R^2\setminus [-2L,2L]^2} \frac1{|y|^3} \, \dd y \lesssim  \sigma z.
\end{aligned}
\end{equation}

We turn to the estimate of $\mathrm{II}$. Via Jensen's inequality, we obtain
\[
|\mathrm{II}(z)| \lesssim \frac{\sigma z}{L} \left(\int_{\R^2} \left(\int_{0}^H \left(\grad_y P_{\frac{z+\tilde z}2} \ast \zeta(\cdot,z)\right) \cdot \left(\grad_y P_{\frac{z+\tilde z}2} \ast D_{\sigma}\tilde \zeta(\cdot,\tilde z)\right)  \tilde z\, \dd\tilde z\right)^2\dd  y\right)^{1/2}.
\]
Performing a change of variables $s=(z+\tilde z)/2$ and defining
\begin{gather*}
 \Psi(s,y) = s\grad_y P_s(y),\quad F_z(s,y) = D_{\sigma}\tilde \zeta(y ,2s-z),\\
  \rho_z(y) =\zeta(y,z),\quad m_z(s) = \chi_{(\frac{z}2,\frac{z+H}2)}(s) \frac{2s-z}{s},
\end{gather*}
the latter can be rewritten as
\[
|\mathrm{II}(z)| \lesssim \frac{\sigma z}{L} \left(\int_{\R^2} \left(\int_0^{\infty} (\Psi\ast\rho_z)(s,y)\cdot (\Psi\ast F_z)(s,y) m_z(s)\frac{\dd s}{s}\right)^2\dd y\right)^{1/2}.
\]
Noticing that $|m_z(s)|\lesssim 1$, and using a variant of a bilinear estimate by Coifman and Meyer \cite{CoifmanMeyer78}, see Proposition 2.3 of \cite{ChanilloMalchiodi24}, we estimate this expression further by
\begin{equation}\label{27}
| \mathrm{II}| \lesssim \frac{\sigma z}{L} \|\rho_z\|_{\text{BMO}(\R^2)} \|N(\Psi\ast F_z)\|_{L^2(\R^2)},
\end{equation}
where $N$ is the non-tangential maximal function defined as 
\[
N g(y)  = \sup\left\{ |g(\hat y,z)|:\: |y-\hat y| \le z\right\}.
\]
By the temperature bound \eqref{17} and the definition of $\rho$, the first norm on the right-hand side is trivially controlled,
\[
\|\rho_z\|_{\text{BMO}(\R^2)} \le 2 \|\theta(\cdot,z)\|_{L^{\infty}(\R^2)}\lesssim1.
\]
To estimate the second norm,   we have thanks to the  bound on the second term in \eqref{26} and the temperature maximum \eqref{17}  that
\[
|(\Psi\ast F_z)(s,y)| \lesssim \int_{[-L,L]^2} \frac{s}{(| y-\tilde y| + s)^3} \, \dd\tilde y \lesssim \int_{\R^2} \frac{s}{(| \hat y| + s)^3} \, \dd\hat y\lesssim 1.
\]
This estimate can be improved for   $y \not\in [-2L,2L]^2$, because then   $|y-\tilde y|\gtrsim L\gtrsim |y|$. Therefore, for any  $y \not\in [-2L,2L]^2$
\[
|(\Psi\ast F_z)(s,y)| \lesssim  \int_{[-L,L]^2} \frac{s}{(|  y| + s)^3} \, \dd\tilde y \lesssim \frac{L^2}{|y|^2}.
\]
Combining both estimates, we find
\begin{align*}
\int_{\R^2} |N(\Psi\ast F_z)|^2\,\dd y & = \int_{[-2L,2L]^2} |N(\Psi\ast F_z)|^2\,\dd y +\int_{\R^2\setminus [-2L,2L]^2} |N(\Psi\ast F_z)|^2\,\dd y
\\
&\lesssim L^2 + L^4 \int_{\R^2\setminus [-2L,2L]^2}\frac1{|y|^4}\, \dd y \lesssim L^2.
\end{align*}
We plug all the gathered information into \eqref{27} to conclude that
\[
|\mathrm{II}(z)| \lesssim \sigma z.
\]
\end{proof}

Next, we have to estimate the error that is due to studying the problem on the half-space instead of the bounded domain problem. That is, we have to estimate the error   $h_{\sigma}-\tilde h_{\sigma}$.
First, we consider the solution $f_{\sigma}$ to the problem
\[
\Delta^2 f_{\sigma} = 0
\]
in the layer $[0,L]^2\times [0,H]$ with   boundary conditions  
\begin{align*}
f_{\sigma}=0,\,  \partial_z f_{\sigma}-\sigma \partial_z^2 f_{\sigma} &=  \sigma   \partial_z^2 v_0 - \sigma \partial_z^2 \tilde v_0\quad\mbox{on }\{z=0\},\\
f_{\sigma}=0,\,   \partial_z  f_{\sigma} +\sigma\partial_z^2   f_{\sigma} &=- \partial_z \tilde h_{\sigma}-\sigma\partial_z^2 \tilde h_{\sigma} -\sigma \partial_z^2 v_0 \quad\mbox{on }\{z=H\},
\end{align*}
where $\tilde v_0$ denotes the solution to the half-space problem associated to $\tilde \theta$, so that
\[
\partial_z^2 \tilde v_0(x) = \int_{\R^3_+} \tilde z \Delta_y P_{\tilde z}(y-\tilde y) \tilde \theta(\tilde y,\tilde z)\,\dd (\tilde y,\tilde z),
\]
cf.~\eqref{60}.

 We have  the following error estimate.

\begin{lemma}
\label{L10}
The following estimate is true:
\begin{align*}
 \left.\la (\partial_z^2 v_0-\partial_z^2 \tilde v_0)^2\ra\right|_{z=0}+ \left. \la (\partial_z^2 v_0)^2\ra \right|_{z=H} \lesssim   1.
 \end{align*}
\end{lemma}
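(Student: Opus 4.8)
\emph{Proof plan.}
Since $v_0$ and $\tilde v_0$ solve time-independent elliptic problems, I drop the time variable; write $\tilde\theta=\chi_{[0,H/2]}\theta$, extended $L$-periodically in $y$, and recall from \eqref{17} and $H\sim L$ that $\|\tilde\theta\|_{L^2([0,L]^2\times[0,H/2])}\lesssim H^{3/2}$. The plan rests on a Poisson-type estimate for the biharmonic operator in a slab: if $u$ is $L$-periodic in $y$, biharmonic in $[0,L]^2\times(a_-,a_+)$ with width $a=a_+-a_-$, and satisfies $u=\partial_z u=0$ on one of the two horizontal faces, then on that very face
\[
\la(\partial_z^2 u)^2\ra\ \lesssim\ \frac1{a^4}\,\la u^2\ra\big|_{\mathrm{opp.\ face}}+\frac1{a^2}\,\la(\partial_z u)^2\ra\big|_{\mathrm{opp.\ face}}.
\]
I would prove this by Fourier transforming in $y$: the mode $\hat u(k,\cdot)$ solves $(\partial_z^2-|k|^2)^2\hat u=0$ with two homogeneous conditions on one face, so $\partial_z^2\hat u$ there is a fixed linear combination of $\hat u$ and $\partial_z\hat u$ on the opposite face, whose multipliers one checks — separately for $|k|a\lesssim1$ (the profile is essentially cubic) and $|k|a\gg1$ (use $\sup_k|k|^je^{-|k|a}\lesssim a^{-j}$) — to be $\lesssim a^{-2}$, resp.\ $\lesssim a^{-1}$, uniformly in $k$; Plancherel concludes.

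Second, I would establish the trace bounds $\la v_0^2\ra|_{z=3H/4}+\la\tilde v_0^2\ra|_{z=H}\lesssim H^4$ and $\la(\partial_z v_0)^2\ra|_{z=3H/4}+\la(\partial_z\tilde v_0)^2\ra|_{z=H}\lesssim H^2$. For $\phi\in\{v_0,\tilde v_0\}$, testing $\laplace^2\phi=-\laplace_y\tilde\theta$ (cf.\ \eqref{40} and its half-space analogue) with $\phi$ and integrating by parts — the boundary contributions vanish by the no-slip conditions \eqref{8} at $z=0$ together with those at $z=H$ for $v_0$, resp.\ the exponential decay of the decaying half-space solution $\tilde v_0$ as $z\to\infty$ — and using $\|\laplace_y\phi\|_{L^2}\le\|\laplace\phi\|_{L^2}$, gives $\|\laplace\phi\|_{L^2}\le\|\tilde\theta\|_{L^2}\lesssim H^{3/2}$; Poincaré-type estimates on scale $H$ (the data of $\phi$ being confined to $z\le H/2$ and the horizontal period being $\sim H$) then yield $\|\grad\phi\|_{L^2}\lesssim H^{5/2}$ and $\|\phi\|_{L^2}\lesssim H^{7/2}$. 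Since $\tilde\theta$ is supported in $\{z\le H/2\}$, both $\phi$ and $\partial_z\phi$ are biharmonic in $\{z>H/2\}$; the interior estimate bounding the supremum of a biharmonic function, and of its gradient, on a ball by the $L^2$ norm on a concentric ball of twice the radius — applied on balls of radius $cH$ centred at $z=H$ for $\tilde v_0$ (an interior point) and at $z=3H/4$ for $v_0$ — followed by averaging in $y$ converts the global $L^2$ bounds into the displayed traces (the powers match: $L^{-2}H^{-3}\cdot H^2\cdot H^7=H^4$ and $L^{-2}H^{-5}\cdot H^2\cdot H^7=H^2$).

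Finally I would assemble the claim. For the first term, put $r=v_0-\tilde v_0$: it is biharmonic in $[0,L]^2\times[0,H]$ (both solve the same inhomogeneous equation there), with $r=\partial_z r=0$ at $z=0$ and $r=-\tilde v_0$, $\partial_z r=-\partial_z\tilde v_0$ at $z=H$; the slab estimate with $a=H$ and the trace bounds give $\la(\partial_z^2 v_0-\partial_z^2\tilde v_0)^2\ra|_{z=0}=\la(\partial_z^2 r)^2\ra|_{z=0}\lesssim H^{-4}\la\tilde v_0^2\ra|_{z=H}+H^{-2}\la(\partial_z\tilde v_0)^2\ra|_{z=H}\lesssim1$. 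For the second term, $v_0$ is biharmonic in $[0,L]^2\times(3H/4,H)$ with $v_0=\partial_z v_0=0$ at $z=H$; the slab estimate with $a=H/4$ and the traces at $z=3H/4$ give $\la(\partial_z^2 v_0)^2\ra|_{z=H}\lesssim H^{-4}\la v_0^2\ra|_{z=3H/4}+H^{-2}\la(\partial_z v_0)^2\ra|_{z=3H/4}\lesssim1$. Summing concludes.

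The main obstacle is that the argument is scale-critical: it closes only because the powers of $H$ coming out of the energy estimate ($H^{3/2}$), Poincaré ($H^{5/2}$, $H^{7/2}$) and the interior estimates (the $H^4$- and $H^2$-traces) cancel \emph{exactly} against the gains $H^{-4}$ and $H^{-2}$ in the biharmonic slab estimate, a balance that uses $H\sim L$ essentially; the delicate points are the uniform multiplier bound underlying the slab estimate (especially in the regime $|k|H\gg1$) and justifying the decay of $\tilde v_0$ at infinity that legitimises the energy identity.
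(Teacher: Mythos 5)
Your plan is correct in outline, and it is only partly the paper's route. The bottom-boundary estimate uses the same mechanism as the paper: the difference $r=v_0-\tilde v_0$ is biharmonic in the layer, clamped at $z=0$, with data $-\tilde v_0$, $-\partial_z\tilde v_0$ at $z=H$, and a horizontal-Fourier multiplier bound (your ``slab estimate''; its determinant $\sinh^2(|k|a)-|k|^2a^2>0$ is exactly the paper's observation that the denominator of the explicit ODE solution is bounded away from zero, and the multiplier weights $a^{-2}$, $a^{-1}$ reproduce the paper's brutal bound $|\partial_z^2\widehat{V_0}(k,0)|\lesssim|a_k|+|b_k|$). Where you genuinely diverge is in the inputs and in the top-boundary term: the paper simply cites the pointwise bounds \eqref{25} from Proposition 3.4 of \cite{ChanilloMalchiodi24} for the traces of $\tilde v_0$ (including second derivatives) at $z=H$, and then treats $\la(\partial_z^2 v_0)^2\ra|_{z=H}$ again through the explicit formula for $V_0$ at $z=H$; you instead derive the needed $L^2$ traces self-containedly (energy identity for $\laplace^2\phi=-\laplace_y\tilde\theta$, Poincar\'e on scale $H$, interior estimates for biharmonic functions on balls of radius $\sim H$), and you handle the top boundary by a purely local clamped-slab argument on $(3H/4,H)$ using only traces of $v_0$ itself at $z=3H/4$, so you never need second-derivative control of $\tilde v_0$. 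What each approach buys: the paper's is shorter given the cited kernel estimates and gives stronger, pointwise control; yours is self-contained and avoids the full strength of \eqref{25}, at the price of several sketched but standard ingredients that must be carried out --- the uniform-in-$k$ multiplier bounds (including the $k=0$ cubic mode), the interior biharmonic estimates, the decay of $\tilde v_0$ justifying the energy identity, and, most importantly, the Poincar\'e step for the half-space problem, where vertical Poincar\'e on $(0,\infty)$ is unavailable and you must argue horizontally, using that $\tilde v_0$ has vanishing horizontal mean (the forcing $-\laplace_y\tilde\theta$ is mean-free and the decaying zero mode vanishes) together with $|k|\gtrsim 1/L\sim 1/H$; make that mechanism explicit, since the scale-critical bookkeeping you point out ($H\sim L$) enters precisely there.
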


\begin{proof}We horizontally Fourier transform the problem for the difference $V_0=v_0-\tilde v_0$ in the bounded domain. The resulting ODE can be solved explicitly and yields that
\begin{align*}
\mel \partial_z^2 \widehat{V_0}(k,z)\\
& = \frac{A_k(z)e^{-|k|(3H+z)} + B_k(z)e^{-|k|(H-z)} +D_k(z) e^{-|k|(H+z)}+ E_k(z)e^{-|k|(3H-z)}  }{1 - 2e^{-2H|k|} (1+2H^2|k|^2) +e^{-4H|k|}},
\end{align*}
where
\begin{align*}
A_k(z)& = (1+(H-z)|k|) H^2|k|^2 a_k + (2+(H-z)|k|)H|k| b_k,\\
B_k(z) & = (1-(H-z)|k|) H^2 |k|^2 a_k  - (2-(H-z)|k|)H|k| b_k,\\
C_k(z) &  = -(1-z|k| + H|k|(3-2z|k|))H^2|k|^2 a_k\\
&\quad  -(2-z|k|-H|k|(3-2z|k|)) H|k|b_k,\\
D_k(z) &  = -(1+z|k| - H|k|(3+2z|k|))H^2|k|^2 a_k\\
&\quad  +(2+z|k|+H|k|(3+2z|k|)) H|k|b_k,
\end{align*}
and where $a_k = -  H^{-2}\widehat{\tilde v}_0(k,H)$ and $b_k = -H^{-1}\partial_z \widehat{\tilde v}_0(k,H)$. Notice that the denominator is bounded away from zero: Its minimal value is attained for $H|k|=2\pi H/L\sim1$ in the sense that both $H$ and $L$ scale identically in $\Ra$.

At the bottom boundary, we find that
\begin{align*}
\mel \partial_z^2 \widehat{V_0}(k,0)\\
& = \frac{4(a_k H^3|k|^3  +b_k H|k|)(e^{-H|k|}-e^{-3H|k|}) - 4b_k H^2 |k|^2(e^{-H|k|}+e^{-3H|k|})  }{1 - 2e^{-2H|k|} (1+2H^2|k|^2) +e^{-4H|k|}},
\end{align*}
It was proved in Proposition 3.4 of \cite{ChanilloMalchiodi24} that 
\begin{equation}\label{25}
H^{-2}|{\tilde v}_0 | + H^{-1}|\grad {\tilde v}_0| + |\grad^2 \tilde v_0|\lesssim 1\quad\mbox{at $z=H$},
\end{equation}
 and thus, in particular, in view of  Plancherel's identity, it holds that
\[
\sum_{k\in \frac{2\pi}{L}\Z^2}\left( |a_k|^2 +|b_k|^2\right)  = \avint_{[0,L]^2} H^{-4} \tilde v_0^2 + H^{-2}(\partial_z \tilde v_0)^2 \lesssim 1.
\]
Therefore, we may now brutally estimate
\[
|\partial_z^2 \widehat{V_0}(k,0)| \lesssim |a_k|+|b_k|,
\]
which is valid under the assumption that the denominator is not vanishing, and we deduce via Plancherel's identity, that 
\[
\left.\la (\partial_z^2 V_0)^2\ra\right|_{z=0}\lesssim 1,
\]
as stated in the lemma. 

Similarly, at the top boundary, we have that
\begin{align*}
\partial_z^2 \widehat V_0(k,H)
& = H|k|\left(2-8 H|k|e^{-2H|k|}-2e^{-4H|k|}\right) b_k\\
&\quad  - H^2|k|^2 \left(1+(2+ 4 H^2 |k|^2)e^{-2H|k|} - e^{-4H|k|}\right)a_k,
\end{align*}
and thus, we have
\[
|\partial_z^2 \widehat{V}_0(k,H)| \lesssim H|k| |b_k| + H^2 |k|^2 |a_k|.
\]
Via Plancherel's identity and using the definition of the coefficients $a_k$ and $b_k$, the latter implies that
\[
\left.\la (\partial_z^2 V_0)^2\ra\right|_{z=H} \lesssim \left.\la |\partial_z \grad_y \tilde v_0|^2\ra\right|_{z=H} +\left. \la |\grad^2_y\tilde v_0|^2\ra\right|_{z=H}.
\]
From the definition of $V_0$ and the bounds in \eqref{25}, we deduce that
\[
\left.\la (\partial_z^2 v_0)^2\ra\right|_{z=H} \lesssim \left.\la |\grad^2  \tilde v_0|^2\ra\right|_{z=H} \lesssim 1,
\]
which is the second estimate we aimed to prove.
\end{proof}

The error estimate for the vertical velocity and the earlier estimates on the correction term $\tilde h_{\sigma}$ in Lemma \ref{L13} allow us to suitably bound $f_{\sigma}$.

\begin{lemma}\label{L11}
Suppose that $\sigma\le H$. Then following estimate is true:
\[
\int_0^H \la |\grad^2 f_{\sigma}|^2\ra \, \dd z + \frac1{\sigma}\left.\la (\partial_z f_{\sigma})^2\ra\right|_{z=0,H} \lesssim \sigma.
\]\
\end{lemma}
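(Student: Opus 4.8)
The function $f_\sigma$ is bi-harmonic in the layer $[0,L]^2\times[0,H]$ and carries the "mismatch" data on the two boundaries that comes from replacing the bounded-domain kernels by the half-space kernels. The natural route is an energy estimate for the bi-Laplace operator under Navier-type boundary conditions, tracking the boundary data on the right-hand side. Concretely, I would multiply $\Delta^2 f_\sigma=0$ by $f_\sigma$ and integrate by parts twice to obtain
\[
\int_0^H\la|\grad^2 f_\sigma|^2\ra\,\dd z = \text{(boundary terms)},
\]
where, after using $f_\sigma=0$ on $\{z=0,H\}$, the only surviving boundary contributions are of the form $\la \partial_z f_\sigma\,\partial_z^2 f_\sigma\ra$ evaluated at $z=0,H$. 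On those boundaries the condition defining $f_\sigma$ (from the display just before Lemma \ref{L10}) lets me replace $\partial_z f_\sigma \mp \sigma\partial_z^2 f_\sigma$ by the prescribed data $g^{\pm}$, namely $g^{-}=\sigma(\partial_z^2 v_0-\partial_z^2\tilde v_0)$ at $z=0$ and $g^{+}=-\partial_z\tilde h_\sigma-\sigma\partial_z^2\tilde h_\sigma-\sigma\partial_z^2 v_0$ at $z=H$. Writing $\sigma\partial_z^2 f_\sigma = \pm(\partial_z f_\sigma - g^\pm)$ and substituting, the boundary term becomes $\frac1\sigma\la(\partial_z f_\sigma)^2\ra|_{z=0,H}$ up to a cross term $\frac1\sigma\la \partial_z f_\sigma\, g^\pm\ra$; this is exactly why the quantity $\frac1\sigma\la(\partial_z f_\sigma)^2\ra|_{z=0,H}$ appears on the left-hand side of the lemma, and it is absorbed there. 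Thus the estimate reduces to
\[
\int_0^H\la|\grad^2 f_\sigma|^2\ra\,\dd z + \frac1\sigma\la(\partial_z f_\sigma)^2\ra\big|_{z=0,H} \lesssim \frac1\sigma\left(\la (g^-)^2\ra|_{z=0}+\la (g^+)^2\ra|_{z=H}\right),
\]
after a Cauchy--Schwarz/Young absorption of the cross terms.

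It then remains to bound the data in the $L^2$ norm on each boundary. At $z=0$ we have $\frac1\sigma\la(g^-)^2\ra = \sigma\la(\partial_z^2 v_0-\partial_z^2\tilde v_0)^2\ra|_{z=0}$, which is $\lesssim\sigma$ by the first estimate in Lemma \ref{L10}. At $z=H$, $\frac1\sigma\la(g^+)^2\ra \lesssim \frac1\sigma\la(\partial_z\tilde h_\sigma)^2\ra + \sigma\la(\partial_z^2\tilde h_\sigma)^2\ra + \sigma\la(\partial_z^2 v_0)^2\ra$ evaluated at $z=H$. The first two of these are controlled using Lemma \ref{L13}: the bound $\sup_{[H-1,H]}\la|\grad\tilde h_\sigma|^2\ra\lesssim\sigma^2$ gives $\frac1\sigma\la(\partial_z\tilde h_\sigma)^2\ra\lesssim\sigma$, and $\sup_{[H-1,H]}\la|\grad_y^2\tilde h_\sigma|^2\ra\lesssim\sigma^2/H^2$ together with the bi-harmonicity of $\tilde h_\sigma$ (so that the mixed and $zz$ second derivatives are comparable in $L^2$ to the $yy$-derivatives, or can be bounded directly from Lemma \ref{L8}) gives $\sigma\la(\partial_z^2\tilde h_\sigma)^2\ra\lesssim\sigma^3/H^2\le\sigma$ since $\sigma\le H$. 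Finally $\sigma\la(\partial_z^2 v_0)^2\ra|_{z=H}\lesssim\sigma$ by the second estimate in Lemma \ref{L10}. Collecting terms yields the claimed $\lesssim\sigma$.

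Two points deserve care. First, the integration by parts must be carried out cleanly for the bi-Laplacian with these Robin-type conditions on a periodic slab; one should verify that no interior boundary terms are dropped and that the identity $\int\Delta^2 f\, f = \int|\grad^2 f|^2 - \text{bdry}$ holds with the boundary term being precisely $\pm\la\partial_z f\,\partial_z^2 f\ra|_{z=0,H}$ (the $\la\Delta f\,\partial_z f\ra$ and $\la\partial_n\Delta f\, f\ra$ pieces either cancel using $f_\sigma=0$ on the boundary or recombine into the stated form — this is standard but must be written carefully since $\int|\grad^2 f|^2$ and $\int(\Delta f)^2$ differ by boundary terms that vanish here because $f_\sigma$ and $\grad_y f_\sigma$ vanish on $\{z=0,H\}$). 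Second, one must ensure that the cross term $\frac1\sigma\la\partial_z f_\sigma\, g^\pm\ra$ is genuinely absorbable: bounding it by $\frac14\cdot\frac1\sigma\la(\partial_z f_\sigma)^2\ra + \frac1\sigma\la(g^\pm)^2\ra$ works, with the first piece absorbed into the left-hand side. The main obstacle is really the bookkeeping in the energy identity — getting the signs of the Robin boundary conditions (the $\mp\sigma$ at top/bottom) consistent with the signs coming out of the integration by parts, so that the boundary quadratic form is coercive (i.e. contributes $+\frac1\sigma\la(\partial_z f_\sigma)^2\ra$ rather than something of indefinite sign) — together with checking the decay/truncation estimates for $\partial_z^2\tilde h_\sigma$ near $z=H$ carefully enough to get the power of $\sigma$ to come out as $\sigma$ and not worse.
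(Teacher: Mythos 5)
Your proposal is correct and follows essentially the same route as the paper: test $\Delta^2 f_\sigma=0$ with $f_\sigma$, use $f_\sigma=0$ on $\{z=0,H\}$ so that only the $\la\partial_z f_\sigma\,\partial_z^2 f_\sigma\ra$ boundary terms survive, insert the Robin-type boundary data to produce the coercive term $\frac1\sigma\la(\partial_z f_\sigma)^2\ra|_{z=0,H}$, absorb the cross terms by Young's inequality, and bound the data via Lemmas \ref{L10} and \ref{L13} (the paper likewise uses the kernel bounds behind Lemma \ref{L13} to control $\partial_z^2\tilde h_\sigma$ at $z=H$, exactly the point you flag), yielding $\sigma+\sigma^3/H^2\lesssim\sigma$ for $\sigma\le H$.
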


\begin{proof}
Testing the bi-Laplace equation with $f_{\sigma}$ and integrating by parts yields
\[
\int_0^H \la |\grad^2 f_{\sigma}|^2\ra\, \dd z - \left.\la \partial_z f_{\sigma}\partial_z^2 f_{\sigma}\ra\right|_{z=0}^{z=H} = 0.
\]
We use now the Navier-slip boundary conditions to write
\begin{align*}
\mel \int_0^H \la |\grad^2 f_{\sigma}|^2\ra\, \dd z +\frac1{\sigma} \left.\la (\partial_z f_{\sigma})^2 \ra\right|_{z=0,H}\\
 & = \left.\la \partial_z f_{\sigma}(\partial_z^2 v_0-\partial_z^2\tilde v_0)\ra\right|_{z=0}\\
&\quad - \frac1{\sigma}\left.\la \partial_z f_{\sigma}\partial_z \tilde h_{\sigma}\ra\right|_{z=H} -  \left. \la \partial_z f_{\sigma} \partial_z^2 \tilde h_{\sigma}\ra\right|_{z=H} -\left. \la\partial_z f_{\sigma} \partial_z^2 v_0\ra\right|_{z=H}.
\end{align*}
With the help of Young's inequality and  the error bounds in Lemmas \ref{L13} and \ref{L10}, we then estimate
\begin{align*}
 \int_0^H \la |\grad^2 f_{\sigma}|^2\ra\, \dd z +\frac1{\sigma} \left.\la (\partial_z f_{\sigma})^2 \ra\right|_{z=0,H} & \lesssim \sigma  \left.\la (\partial_z^2 v_0-\partial_z^2\tilde v_0)^2 \ra\right|_{z=0}+  \sigma  \left.\la (\partial_z^2 v_0)^2 \ra\right|_{z=H}\\
&\quad +\frac1{\sigma}  \left.\la (\partial_z\tilde h_{\sigma})^2 \ra\right|_{z=H} +  \sigma  \left.\la (\partial_z^2 \tilde h_{\sigma})^2 \ra\right|_{z=H} \\
&\lesssim \sigma  + \frac{\sigma^3}{H^2}  \lesssim \sigma,
\end{align*}
because $\sigma \le H$. This is what we aimed to prove.
\end{proof}

Next, we show that this is good enough to control the associated part in the Nusselt number estimate.

\begin{lemma}\label{L12}
Suppose that $\sigma \le H$. Then following estimate is true:
\[
\left| \frac1{\delta} \int_0^{\delta}\la f_{\sigma}\theta\ra\, \dd z\right| +  \left| \frac1{\delta} \int_{H-\delta}^{H}\la f_{\sigma}\theta\ra\, \dd z\right|\lesssim\delta^{\frac32}\sigma^{\frac12}  + \delta \sigma .
\]
\end{lemma}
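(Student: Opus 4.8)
The plan is to reduce everything to the energy estimate of Lemma~\ref{L11} (which is where the hypothesis $\sigma\le H$ is needed) together with one-dimensional calculus in the vertical variable; no cancellation in the product $\la f_\sigma\theta\ra$ will be exploited, the crude bound $|\theta|\le1$ from \eqref{17} suffices. First, since $f_\sigma$ vanishes on both plates, for $z$ in the bottom layer $[0,\delta]$ I would write $f_\sigma(y,z)=\int_0^z\partial_z f_\sigma(y,z')\,\dd z'$, and then, using the triangle inequality, $\|\theta\|_{L^\infty}\le1$, Fubini, and the Cauchy--Schwarz inequality on the (unit mass) horizontal/time average,
\[
\left|\frac1\delta\int_0^\delta\la f_\sigma\theta\ra\,\dd z\right|\le\frac1\delta\int_0^\delta\int_0^z\la|\partial_z f_\sigma|^2\ra^{1/2}(z')\,\dd z'\,\dd z .
\]
Thus the whole task is reduced to a pointwise-in-$z'$ bound for the trace quantity $\la|\partial_z f_\sigma|^2\ra(z')$ throughout the boundary layer.

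For that pointwise bound I would once more invoke the fundamental theorem of calculus, now writing $\partial_z f_\sigma(y,z')=\partial_z f_\sigma(y,0)+\int_0^{z'}\partial_z^2 f_\sigma(y,z'')\,\dd z''$; squaring, integrating over the horizontal torus, passing to the long-time average, and applying Minkowski's and the Cauchy--Schwarz inequalities in the $z''$-integral gives
\[
\la|\partial_z f_\sigma|^2\ra(z')\lesssim\la(\partial_z f_\sigma)^2\ra\big|_{z=0}+z'\int_0^H\la|\grad^2 f_\sigma|^2\ra\,\dd z''\lesssim\sigma^2+z'\sigma ,
\]
where the last inequality is exactly Lemma~\ref{L11} (both the boundary-trace term and the interior $H^2$ term are controlled there by $\sigma$ and $\sigma^2$ respectively). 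Hence $\la|\partial_z f_\sigma|^2\ra^{1/2}(z')\lesssim\sigma+(z'\sigma)^{1/2}$ for every $z'\in[0,\delta]$.

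Substituting this into the first display and carrying out the elementary integrals,
\[
\left|\frac1\delta\int_0^\delta\la f_\sigma\theta\ra\,\dd z\right|\lesssim\frac1\delta\int_0^\delta\int_0^z\bigl(\sigma+(z'\sigma)^{1/2}\bigr)\,\dd z'\,\dd z\lesssim\frac1\delta\int_0^\delta\bigl(z\sigma+z^{3/2}\sigma^{1/2}\bigr)\,\dd z\lesssim\delta\sigma+\delta^{3/2}\sigma^{1/2},
\]
which is the bottom contribution in the claim. The top contribution $\frac1\delta\int_{H-\delta}^H\la f_\sigma\theta\ra\,\dd z$ is handled verbatim: using $f_\sigma=0$ on $\{z=H\}$ one writes $f_\sigma(y,z)=-\int_z^H\partial_z f_\sigma(y,z')\,\dd z'$ for $z\in[H-\delta,H]$, bounds $\la|\partial_z f_\sigma|^2\ra(z')$ by starting the calculus at $z=H$ and invoking the trace bound $\la(\partial_z f_\sigma)^2\ra|_{z=H}\lesssim\sigma^2$ from Lemma~\ref{L11}, and finishes with the same integrations, now over $[z,H]$.

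Since the argument is essentially bookkeeping built on a single energy estimate, I do not expect a genuine obstacle. The only slightly delicate point is the interchange of the $\limsup$-in-time average with the $z''$- and $z'$-integrations when passing from the pointwise identities to the averaged inequalities; this is justified in the standard way (a Fatou/monotone-convergence argument on $\sup_{s\ge T}$ of the truncated-time averages), all quantities involved being finite for each fixed truncation time.
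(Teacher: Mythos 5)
Your argument is correct and is essentially the paper's proof: both use $|\theta|\le1$ and the vanishing of $f_{\sigma}$ at the plate to reduce to $\partial_z f_{\sigma}$, then control $\partial_z f_{\sigma}$ by its boundary trace plus the integral of $\partial_z^2 f_{\sigma}$, and close with Cauchy--Schwarz/Jensen and the two bounds of Lemma~\ref{L11}. The only difference is cosmetic (you derive a pointwise-in-$z'$ bound on $\la(\partial_z f_{\sigma})^2\ra^{1/2}$ before integrating, while the paper integrates first and then applies Jensen), and your explicit treatment of the top layer matches the paper's ``analogous'' remark.
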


\begin{proof}
We prove the estimate near the bottom boundary. The one on the top boundary is obtained analogously. We use the maximum principle for the temperature \eqref{17} and the Dirichlet boundary conditions for $f_{\sigma}$ to estimate with the help of the Poincar\'e inequality
\[
\left| \frac1{\delta} \int_0^{\delta}\la f_{\sigma}\theta\ra\, \dd z\right| \le  \frac1{\delta} \int_0^{\delta}\la |f_{\sigma}|\ra\, \dd z \le  \int_0^{\delta} \la |\partial_z f_{\sigma}|\ra\, \dd z.
\]
Smuggling in the boundary term, applying once more the  Poincar\'e inequality and using Jensen's inequality together with the estimates from Lemma \ref{L11} then gives
\begin{align*}
\left| \frac1{\delta} \int_0^{\delta}\la f_{\sigma}\theta\ra\, \dd z\right| &\le  \delta  \int_0^{\delta} \la |\partial_z^2 f_{\sigma} |\ra\, \dd z + \delta \la | \partial_z f_{\sigma}|_{z=0}|\ra \\
&\le \delta^{3/2}\left(\int_0^H \la (\partial_z^2 f_{\sigma})^2\ra\, \dd z\right)^{1/2} +\delta \la ( \partial_z f_{\sigma})^2|_{z=0} \ra^{1/2} \\
&\lesssim \delta^{\frac32}\sigma^{\frac12}  + \delta \sigma ,
\end{align*}
which is our desired result.
\end{proof}

It remains to study the remainder term
\[
\tilde f_{\sigma} = h_{\sigma} - \tilde h_{\sigma} - f_{\sigma},
\]
which is a solution to the bi-Laplace problem
\[
\Delta^2 \tilde f_{\sigma}=0
\]
inside the layer $[0,L]^2\times [0,H]$ with boundary conditions
\begin{gather*}
\tilde f_{\sigma}=0,\, \partial_z\tilde f_{\sigma} -\sigma\partial_z^2 \tilde f_{\sigma}=0\quad \mbox{on }\{z=0\},\\
\tilde f_{\sigma}=-\tilde h_{\sigma},\, \partial_z\tilde f_{\sigma} +\sigma\partial_z^2 \tilde f_{\sigma} =0 \quad \mbox{on }\{z=H\}.
\end{gather*}
Controlling $\tilde f_{\sigma}$ globally as we did to bound $f_{\sigma}$ is not promising, because the boundary data at the top boundary are unbounded by Lemma \ref{L13}. Instead, in the following lemma, we solve this problem explicitly in Fourier space.

\begin{lemma}\label{L15}
The following estimate is true:
\[
\sup_{ [0,H]} \la (\partial_z \tilde f_{\sigma})^2\ra   \lesssim \sigma^2.
\]
\end{lemma}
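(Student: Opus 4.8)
The function $\tilde f_\sigma$ solves the homogeneous bi-Laplace equation with zero data at $z=0$ (a "Robin-type" condition $\partial_z \tilde f_\sigma = \sigma \partial_z^2 \tilde f_\sigma$ together with the Dirichlet condition) and at $z=H$ the data $\tilde f_\sigma=-\tilde h_\sigma$ together with $\partial_z \tilde f_\sigma = -\sigma \partial_z^2 \tilde f_\sigma$. The plan is to pass to horizontal Fourier series: for each wave number $k\in\frac{2\pi}{L}\Z^2$ the profile $\widehat{\tilde f}_\sigma(k,\cdot)$ satisfies $(\partial_z^2 - |k|^2)^2 \widehat{\tilde f}_\sigma = 0$, so it is a combination of $e^{\pm |k|z}$ and $z e^{\pm|k|z}$; imposing the four boundary conditions determines the four coefficients as explicit linear functions of the two Dirichlet/Neumann-type data coming from $\tilde h_\sigma$ at $z=H$, namely $\widehat{\tilde h}_\sigma(k,H)$ and $\partial_z \widehat{\tilde h}_\sigma(k,H)$ (the latter feeds in through the combination $\partial_z\tilde f_\sigma+\sigma\partial_z^2\tilde f_\sigma=0$, equivalently via the trace of $\tilde h_\sigma$ only—one should check whether $\partial_z^2$ traces enter; if so, Lemma~\ref{L8} controls them). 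This is structurally identical to the computation in the proof of Lemma~\ref{L10}, and as there the relevant $4\times 4$ determinant is $1 - 2e^{-2H|k|}(1+2H^2|k|^2) + e^{-4H|k|}$ (up to harmless modifications from the $\sigma$-terms), which is bounded away from zero because its minimum is attained at $H|k|\sim 1$ and $H\sim L$.

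**Key steps.** First, write out the general solution $\widehat{\tilde f}_\sigma(k,z) = \alpha_k e^{|k|z} + \beta_k z e^{|k|z} + \gamma_k e^{-|k|z} + \delta_k z e^{-|k|z}$ and set up the linear system from the four boundary conditions. Second, solve it, expressing the coefficients—and hence $\partial_z \widehat{\tilde f}_\sigma(k,z)$ for all $z\in[0,H]$—as bounded multiples (uniformly in $z$, in $k$, and in $\sigma\le H$) of $\widehat{\tilde h}_\sigma(k,H)$ and $H^{-1}\partial_z\widehat{\tilde h}_\sigma(k,H)$; the exponential gaps $e^{-|k|(H-z)}, e^{-|k|(H+z)}$ etc.\ make every coefficient's contribution to $\partial_z\widehat{\tilde f}_\sigma(k,z)$ of size $\lesssim |k|\,(|\widehat{\tilde h}_\sigma(k,H)| + H^{-1}|\partial_z\widehat{\tilde h}_\sigma(k,H)|)$ plus lower-order terms, much as in Lemma~\ref{L10}. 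Third, apply Plancherel:
\[
\sup_{[0,H]}\la (\partial_z\tilde f_\sigma)^2\ra \lesssim \sum_{k} |k|^2\left( |\widehat{\tilde h}_\sigma(k,H)|^2 + H^{-2}|\partial_z\widehat{\tilde h}_\sigma(k,H)|^2\right) \lesssim \la |\grad_y\tilde h_\sigma|^2\ra\big|_{z=H} + H^{-2}\la |\partial_z\grad_y \tilde h_\sigma|^2\ra\big|_{z=H}.
\]
Fourth, invoke Lemma~\ref{L13}: the first term is $\lesssim\sigma^2$ and the second is $\lesssim H^{-2}\cdot(\sigma^2/H^2)\le\sigma^2$ (here one may need the analogue of the second bound in Lemma~\ref{L13} for the mixed derivative $\partial_z\grad_y\tilde h_\sigma$, which follows from the same kernel estimates in Lemma~\ref{L8} exactly as indicated there). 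This yields $\sup_{[0,H]}\la(\partial_z\tilde f_\sigma)^2\ra\lesssim\sigma^2$.

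**Main obstacle.** The delicate point is not the Fourier ODE itself but verifying uniformly in $z\in[0,H]$ and in $|k|$ (both small $H|k|\sim1$ and large) that the explicit coefficients do not amplify the data—i.e.\ that the factors $e^{|k|z}$ and $z e^{|k|z}$ (which blow up for large $z$) are always paired in the solution formula with compensating exponentials $e^{-|k|H}$ or $e^{-2|k|H}$ coming from Cramer's rule, so that $\partial_z\widehat{\tilde f}_\sigma(k,z)$ stays controlled by the traces at $z=H$. This is the same phenomenon that makes the messy formula in Lemma~\ref{L10} come out bounded, and the cleanest route is to organise the solution so that it is manifestly expressed in the "decaying" exponentials $e^{-|k|(H\pm z)}$, $e^{-|k|(3H\pm z)}$, after which every term is an $O(1)$ (for $\sigma\le H$) combination of the data. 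A secondary bookkeeping point is to confirm that the $\sigma$-dependent Robin conditions at $z=0$ and the vanishing Neumann-type condition at $z=H$ do not spoil the lower bound on the determinant; since $\sigma\le H$, the $\sigma|k|$-corrections are comparable to the $H|k|$-terms already present and a short continuity/monotonicity check (or direct estimate $1-2e^{-2H|k|}(1+2H^2|k|^2)+e^{-4H|k|}\gtrsim 1$ for $H|k|\gtrsim 1$ and $\gtrsim (H|k|)^4$ for $H|k|\lesssim1$, combined with $H|k|\gtrsim1$) suffices.
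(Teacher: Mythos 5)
Your strategy is the same as the paper's: horizontally Fourier transform, solve the fourth\--order ODE $(\partial_z^2-|k|^2)^2\widehat{\tilde f}_\sigma=0$ with the four boundary conditions explicitly, bound the resulting multiplier, apply Plancherel, and close with Lemma~\ref{L13}. One simplification you missed (harmless): since the Robin\--type condition at $z=H$, $\partial_z\tilde f_\sigma+\sigma\partial_z^2\tilde f_\sigma=0$, is \emph{homogeneous}, the only inhomogeneous datum is the Dirichlet trace, and indeed the paper obtains $\widehat{\tilde f}_\sigma(k,z)=-m_\sigma(k,z)\,\widehat{\tilde h}_\sigma(k,H)$; no Neumann trace $\partial_z\widehat{\tilde h}_\sigma(k,H)$ enters, so your additional term $H^{-2}\la|\partial_z\grad_y\tilde h_\sigma|^2\ra|_{z=H}$ is unnecessary.

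The genuine weak point is your uniformity argument in $\sigma$ and $k$. You propose to control the $4\times4$ determinant only by a constant, treating the $\sigma$-terms as ``harmless modifications'' comparable to the $H|k|$-terms, and then claim every coefficient contributes $\lesssim|k|$ times the data. That fails when $\sigma|k|\gg1$, which does occur for high wave numbers in the regime $1\lesssim\sigma\le H$: the Cramer numerators carry factors $(1\pm2\sigma|k|)$, in places squared, so a constant lower bound on the denominator only yields $|\partial_z m_\sigma(k,z)|\lesssim|k|(1+\sigma|k|)^2$, and Plancherel then does not reduce to $\la|\grad_y\tilde h_\sigma|^2\ra|_{z=H}\lesssim\sigma^2$. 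The paper's proof hinges on showing that the denominator itself is bounded below by $(1+\sigma|k|)^2$ (it is decomposed into three terms, each nonnegative for $H|k|\gtrsim1$), and this growth exactly cancels the $\sigma$-factors in the numerator, giving $|\partial_z m_\sigma|\lesssim|k|$ uniformly. Alternatively one could accept the extra powers of $\sigma|k|$ and trade them against the faster decay of higher horizontal derivatives of $\tilde h_\sigma$ at $z=H$ (using $\sigma\le H$), but that requires an extension of Lemma~\ref{L13} to third\--order derivatives which you would have to prove; as written, your key estimate in the second step is not justified.
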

\begin{proof}
By a tedious calculation, we   show that 
\[
\widehat{\tilde f}_{\sigma}(k,z) = - m_{\sigma}(k,z)\widehat{\tilde h}_{\sigma}(k,H),
\]
where the Fourier multiplier is given by the lengthy expression
\begin{align*}
 {m}_{\sigma}(k,z)
 &=
\left[(1+2|k|\sigma)^2\right.\\
&\qquad \left. -2e^{-2H|k|} \left(1+\left(2H^2 +8 H \sigma +4\sigma^2\right)|k|^2\right) +(1-2|k|\sigma)^2e^{-4H|k|}\right]^{-1}\\
 &\quad \times \left[e^{-|k|(H-z)} (1+2|k|\sigma)(1+2|k|\sigma +(1+|k|\sigma)|k|(H-z))  \right.\\
 &\qquad \left. +e^{-|k|(3H+z)} (1-2|k|\sigma)(1-2|k|\sigma -(1-|k|\sigma)|k|(H-z))  \right.\\
 &\qquad -e^{-|k|(3H-z)}\left((1-2|k|\sigma )^2 -H|k|(1-|k|\sigma)(1-2|k|\sigma)\right.\\
 &\qquad\quad \left. - \left(|k|-5|k|^2 \sigma+2|k|^3\sigma^2 -2H|k|^2(1-|k|\sigma)\right)z\right)\\
 &\qquad -e^{-|k|(H+z)}\left((1+2|k|\sigma)^2 +H|k|(1+|k|\sigma)(1+2|k|\sigma)\right.\\
 &\qquad\quad \left. \left. + \left(|k|+5|k|^2 \sigma+2|k|^3\sigma^2 +2H|k|^2(1+|k|\sigma)\right)z\right)\right].
\end{align*}
We notice that the denominator can be written as the sum of three  terms
\begin{align*}
\mel \left(1-2(1+2H^2|k|^2)e^{-2H|k|} +e^{-4H|k|} \right)+4\sigma|k| \left(1-4 H|k|e^{-2H|k|} -e^{-4H|k|}\right) \\
&+4\sigma^2 |k|^2 \left(e^{-2H|k|}-e^{-4H|k|}\right),
\end{align*}
which are nonnegative for   $H|k|\ge 2\pi H/L \gtrsim 1$ (again, in the limit as $\Ra\to \infty$). The denominator is thus bounded below   by $(1+\sigma |k|)^2$. For the derivative, we thus compute
\begin{align*}
\mel |\partial_z m_{\sigma}(k,z)|\\
 &\lesssim |k|e^{-|k|(H-z)} (1+|k|(H-z)) +  |k|e^{-|k|(3H+z)} (1+|k|(H-z))\\
&\quad+  |k|e^{-|k|(3H-z)} (1+|k|H + |k|^2H^2)+  |k|e^{-|k|(H+z)} (1+|k|H + |k|^2H^2)\\
&\lesssim |k|e^{-k(H-z)}(1+k(H-z)) +|k|e^{-k(3H+z)}(1+k(3H+z)) \\
&\quad +|k|e^{-2kH}(1+Hk)^2 +|k|e^{-kH}(1+kH)^2\\
&\lesssim |k|.
\end{align*}
 It thus follows that
\[
\sup_{[0,H]} \la (\partial_z\tilde f_{\sigma})^2\ra  \lesssim \left.\la |\grad_y\tilde h_{\sigma}|^2\ra\right|_{z=H} .
\]
The stated estimate now follows from Lemma \ref{L13}.
\end{proof}

We now easily control this corresponding term in the Nusselt number.

\begin{lemma}\label{L20}
The following estimate is true:
\[
\left|\frac1{\delta}\int_0^{\delta} \la \tilde f_{\sigma}\theta\ra\,\dd z\right|\lesssim \delta \sigma.
\]
\end{lemma}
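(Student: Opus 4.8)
The plan is to bound $\tilde f_\sigma$ exactly as the analogous term $f_\sigma$ was handled in Lemma~\ref{L12}. Since $\tilde f_\sigma$ solves $\Delta^2\tilde f_\sigma = 0$ with the homogeneous Dirichlet condition $\tilde f_\sigma = 0$ on $\{z=0\}$, I first use the maximum principle \eqref{17} and this boundary condition together with the Poincar\'e inequality to write
\[
\left|\frac1{\delta}\int_0^{\delta}\la \tilde f_{\sigma}\theta\ra\,\dd z\right|\le \frac1{\delta}\int_0^{\delta}\la |\tilde f_{\sigma}|\ra\,\dd z\le \int_0^{\delta}\la |\partial_z\tilde f_{\sigma}|\ra\,\dd z.
\]
Then, by Jensen's inequality (or simply the supremum bound) and the estimate from Lemma~\ref{L15},
\[
\int_0^{\delta}\la |\partial_z\tilde f_{\sigma}|\ra\,\dd z\le \delta\,\sup_{[0,H]}\la (\partial_z\tilde f_{\sigma})^2\ra^{1/2}\lesssim \delta\sigma,
\]
which is the claimed bound.

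The key point making this argument work, and distinguishing it from the $f_\sigma$ case, is that $\tilde f_\sigma$ is \emph{not} controlled by an energy-type quantity $\int_0^H\la|\grad^2\tilde f_\sigma|^2\ra$ (which would be infinite or at least not usefully small, since the top boundary data $-\tilde h_\sigma$ is only $O(\sigma)$ in the relevant norm but spread over the whole layer). Instead Lemma~\ref{L15} gives a \emph{uniform-in-$z$} bound on $\la(\partial_z\tilde f_\sigma)^2\ra$, obtained from the explicit Fourier-multiplier representation, and this is exactly the quantity needed here: one Poincar\'e step converts the pointwise-in-$z$ value of $\tilde f_\sigma$ near the bottom into $\int_0^\delta\la|\partial_z\tilde f_\sigma|\ra$, and then a single factor of $\delta$ comes out of integrating the uniform derivative bound over $(0,\delta)$.

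There is no serious obstacle here: the lemma is a short corollary of Lemma~\ref{L15}, the only inputs being the homogeneous Dirichlet condition at $z=0$ for $\tilde f_\sigma$, the maximum principle $\|\theta\|_{L^\infty}\le 1$, and the Poincar\'e inequality on $(0,\delta)$ anchored at the bottom boundary. The one point to be slightly careful about is that I use the boundary condition at $z=0$ (not $z=H$, where $\tilde f_\sigma = -\tilde h_\sigma\neq 0$), which is why this estimate is stated only for the integral over $(0,\delta)$; the contribution of $\tilde f_\sigma$ near the top boundary $z=H$ will be absorbed differently, presumably together with that of $\tilde h_\sigma$ itself via the decay estimates of Lemma~\ref{L13}. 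Finally, since ultimately $\delta\le 1$ and $\sigma\le H$, the resulting term $\delta\sigma$ is of the same order as the other correction terms collected in \eqref{29}, so it will not spoil the final bound $\Nu\lesssim 1+\sigma^{1/2}$ after optimising $\delta$.
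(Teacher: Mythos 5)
Your argument is correct and coincides with the paper's own proof: both use the maximum principle \eqref{17}, the homogeneous Dirichlet condition for $\tilde f_{\sigma}$ at $z=0$ with the Poincar\'e inequality, and then the uniform-in-$z$ bound $\sup_{[0,H]}\la(\partial_z\tilde f_{\sigma})^2\ra\lesssim\sigma^2$ from Lemma \ref{L15} to conclude $\lesssim\delta\sigma$. No differences worth noting.
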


\begin{proof}
The proof proceeds similarly as the one of Lemma \ref{L12}. We use the temperature bound \eqref{17}, the homogeneous Dirichlet boundary conditions at the bottom plate to apply the Poincar\'e inequality, and Jensen's inequality
\[
\left|\frac1{\delta}\int_0^{\delta} \la \tilde f_{\sigma}\theta\ra\,\dd z\right| \le \frac1{\delta}\int_0^{\delta}\la |\tilde f_{\sigma}|\ra\, \dd z\lesssim \int_0^{\delta}\la |\partial_z \tilde f_{\sigma}|\ra\, \dd z \le \delta \sup_{[0,1]}\, \la (\partial_z \tilde f_{\sigma})^2\ra^{1/2}.
\]
The statement follows now from Lemma \ref{L15}.
\end{proof}

We have now all information to bound the correction term $h_{\sigma}$.

\begin{prop}\label{P3}
Let $\sigma\le H$. Then the following estimate is true:
\[
\left| \frac1{\delta}\int_0^{\delta} \la h_{\sigma}\theta\ra\, \dd z\right| \lesssim \sigma \delta +\sigma^{\frac12}\delta^{\frac32} ,
\]
for any $\delta\in(0,1)$.
\end{prop}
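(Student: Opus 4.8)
The plan is to assemble the bound from the pieces already in hand. Recall that, by the way the auxiliary problems were set up, the correction term $h_\sigma$ decomposes as
\[
h_\sigma \;=\; \tilde h_\sigma \;+\; f_\sigma \;+\; \tilde f_\sigma,
\]
where $\tilde h_\sigma$ is the half-space corrector analysed in Proposition \ref{P2}, $f_\sigma$ is the explicit bi-Laplace function introduced before Lemma \ref{L10} and estimated in Lemmas \ref{L11}--\ref{L12}, and $\tilde f_\sigma = h_\sigma - \tilde h_\sigma - f_\sigma$ is the remainder controlled in Lemmas \ref{L15} and \ref{L20}. The first thing I would do is confirm that this identity is legitimate in the required sense: since $\tilde f_\sigma$ is \emph{defined} as $h_\sigma - \tilde h_\sigma - f_\sigma$, the identity itself is tautological, so all that needs checking is that the Navier-slip boundary data prescribed for $f_\sigma$ and for $\tilde f_\sigma$ are mutually consistent, i.e.\ that summing them reproduces the boundary conditions \eqref{31} of $h_\sigma$ after subtracting the traces of $\tilde h_\sigma$ at $z=0$ and $z=H$. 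This is pure bookkeeping of the second-order normal traces, using that $\tilde h_\sigma$ solves its half-space problem exactly and that $v_0$, $\tilde v_0$ account for the mismatch between $\partial_z^2 K_0$ and the truncated forcing.

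Given the decomposition, the proof reduces to the triangle inequality and the three estimates already established. Namely,
\[
\left|\frac1\delta\int_0^\delta \la h_\sigma\theta\ra\,\dd z\right|
\le \left|\frac1\delta\int_0^\delta \la \tilde h_\sigma\theta\ra\,\dd z\right|
+ \left|\frac1\delta\int_0^\delta \la f_\sigma\theta\ra\,\dd z\right|
+ \left|\frac1\delta\int_0^\delta \la \tilde f_\sigma\theta\ra\,\dd z\right|.
\]
Proposition \ref{P2} bounds the first term by $\sigma\delta$; Lemma \ref{L12} (applicable since $\sigma\le H$ is assumed) bounds the second by $\sigma^{1/2}\delta^{3/2}+\sigma\delta$; and Lemma \ref{L20} bounds the third by $\sigma\delta$. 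Adding these and absorbing the lower-order pieces gives
\[
\left|\frac1\delta\int_0^\delta \la h_\sigma\theta\ra\,\dd z\right| \lesssim \sigma\delta + \sigma^{1/2}\delta^{3/2},
\]
which is exactly the claim.

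I do not expect a genuine obstacle here: the entire weight of the argument sits in Proposition \ref{P2} and Lemmas \ref{L10}--\ref{L20}, and the present statement is merely their synthesis. The only points to keep track of are that every ingredient has been proved precisely on the boundary-layer slab $z\in(0,\delta)$ with $\delta\in(0,1)$, and that the standing hypothesis $\sigma\le H$ is exactly the one required by Lemmas \ref{L11} and \ref{L12}; no further restriction on $\delta$ or $\sigma$ enters. An identical argument near $z=H$, using the top-boundary halves of Lemmas \ref{L12} and \ref{L20} together with the symmetric version of Proposition \ref{P2}, would yield the corresponding bound for $\int_{H-\delta}^H$, although that is not needed for the present proposition.
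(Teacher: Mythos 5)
Your proposal is correct and is essentially identical to the paper's proof: the paper also writes $h_{\sigma} = \tilde h_{\sigma} + f_{\sigma} + \tilde f_{\sigma}$ and concludes directly from Proposition \ref{P2} and Lemmas \ref{L12} and \ref{L20} via the triangle inequality. Your extra remark about verifying the consistency of the boundary data is harmless but not needed, since $\tilde f_{\sigma}$ is defined as the remainder $h_{\sigma}-\tilde h_{\sigma}-f_{\sigma}$, exactly as in the paper.
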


\begin{proof}
We simply decompose
\[
h_{\sigma} = \tilde h_{\sigma} +f_{\sigma} +\tilde f_{\sigma},
\]
and infer the statement from Proposition \ref{P2} and Lemmas  \ref{L12} and \ref{L20} .
\end{proof}

\medskip

\noindent
\textbf{The correction term generated by $g_{\sigma}$.}
Instead of analysing $g_{\sigma}$ near the bottom boundary, it is enough to collect some information for $h_{\sigma}$ that we already derived on the top boundary. Indeed, if we stress the linear relation of both functions on the temperature $\theta$ by writing $h_{\sigma}=h_{\sigma}[\theta]$ (defined via \eqref{2}, \eqref{31}, \eqref{40}, and \eqref{8}) and $g_{\sigma}=g_{\sigma}[\theta]$ (defined via \eqref{32b}, \eqref{3}, \eqref{7}, \eqref{40}, \eqref{8}), it is not difficult to verify that one can be expressed by the other with the help of a symmetry relation: We have that
\[
g_{\sigma}[\theta] = h_{\sigma}^*[\theta^*],
\]
where we have set $\varphi^*(z) =\varphi(H-z)$. Via a change of variables, it then follows that
\begin{equation}\label{41}
\frac1{\delta}\int_0^{\delta}\la \theta g_{\sigma}[\theta]\ra\, \dd z = \frac1{\delta}\int_{H-\delta}^{H} \la \theta^* h_{\sigma}[\theta^*]\ra \,\dd z.
\end{equation}
Thanks to the maximum principle for the temperature \eqref{17}, it is thus sufficient to estimate $h_{\sigma}$ near the top boundary.

\begin{prop}\label{P4}
The following bound is true:
\[
\left|\frac1{\delta}\int_{H-\delta}^H\la \theta h_{\sigma}\ra\, \dd z \right|\lesssim \sigma \delta + \sigma^{\frac12}\delta^{\frac32},
\]
for any $\delta\in[0,1]$. 
\end{prop}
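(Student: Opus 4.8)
The plan is to reduce Proposition~\ref{P4} to bookkeeping, exploiting the decomposition $h_{\sigma}=\tilde h_{\sigma}+f_{\sigma}+\tilde f_{\sigma}$ together with the near-top estimates already at our disposal. A first observation is that near $z=H$ the delicate singular-integral analysis of Proposition~\ref{P2} is \emph{not} needed: the truncated source $\tilde\theta$ driving $\tilde h_{\sigma}$ is supported in $\{z\le H/2\}$, hence far from the upper slab $[H-\delta,H]$, which is precisely why $\tilde h_{\sigma}$ and $\tilde f_{\sigma}$ already satisfy the strong derivative bounds of Lemmas~\ref{L13} and~\ref{L15} there. The one structural subtlety is that near $z=H$ neither $\tilde h_{\sigma}$ nor $\tilde f_{\sigma}$ carries homogeneous Dirichlet data individually --- in fact $\tilde f_{\sigma}=-\tilde h_{\sigma}$ on $\{z=H\}$ by the boundary value problem defining $\tilde f_{\sigma}$ --- so the Poincaré argument of Lemma~\ref{L20} cannot be applied to them separately. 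The fix is to regroup the decomposition as $h_{\sigma}=(\tilde h_{\sigma}+\tilde f_{\sigma})+f_{\sigma}$; the sum $\tilde h_{\sigma}+\tilde f_{\sigma}$ vanishes on $\{z=H\}$, consistently with $h_{\sigma}=0$ there.

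With this regrouping in place I would treat the two contributions separately. For $f_{\sigma}$, which satisfies homogeneous Dirichlet conditions on \emph{both} plates, Lemma~\ref{L12} (with its hypothesis $\sigma\le H$) directly yields $\bigl|\tfrac1{\delta}\int_{H-\delta}^{H}\la\theta f_{\sigma}\ra\,\dd z\bigr|\lesssim\delta^{3/2}\sigma^{1/2}+\delta\sigma$. For the grouped term, I would invoke the maximum principle~\eqref{17} and the fundamental theorem of calculus, writing $(\tilde h_{\sigma}+\tilde f_{\sigma})(y,z)=-\int_{z}^{H}\partial_{z'}(\tilde h_{\sigma}+\tilde f_{\sigma})(y,z')\,\dd z'$ by the vanishing at $z=H$; this gives $\tfrac1{\delta}\int_{H-\delta}^{H}\la|\tilde h_{\sigma}+\tilde f_{\sigma}|\ra\,\dd z\le\int_{H-\delta}^{H}\la|\partial_{z}(\tilde h_{\sigma}+\tilde f_{\sigma})|\ra\,\dd z$, which by Jensen's inequality is at most $\delta\,\sup_{[H-1,H]}\bigl(\la(\partial_{z}\tilde h_{\sigma})^{2}\ra^{1/2}+\la(\partial_{z}\tilde f_{\sigma})^{2}\ra^{1/2}\bigr)$; since $\delta\le1$, the slab $[H-\delta,H]$ lies inside the range of Lemma~\ref{L13}. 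Both suprema are $O(\sigma)$ by Lemmas~\ref{L13} and~\ref{L15}, so this contribution is $\lesssim\delta\sigma$. Adding the two estimates gives $\bigl|\tfrac1{\delta}\int_{H-\delta}^{H}\la\theta h_{\sigma}\ra\,\dd z\bigr|\lesssim\sigma\delta+\sigma^{1/2}\delta^{3/2}$.

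Since every ingredient is already in hand, I do not expect a real obstacle; the only point requiring care --- and the sole place where the top-boundary argument departs from a verbatim repetition of Proposition~\ref{P3} --- is the regrouping $h_{\sigma}=(\tilde h_{\sigma}+\tilde f_{\sigma})+f_{\sigma}$ forced by the non-homogeneous Dirichlet data of $\tilde f_{\sigma}$ at the upper plate. It is also worth verifying the orientation conventions (the $\mp$ signs in~\eqref{31} and~\eqref{3}, and the reflection $\varphi^{*}(z)=\varphi(H-z)$ behind~\eqref{41}) so that the relevant bounds are genuinely the near-$z=H$ versions of Lemmas~\ref{L13} and~\ref{L15} rather than their bottom-boundary analogues.
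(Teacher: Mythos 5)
Your argument is correct and essentially reproduces the paper's proof: the same decomposition and the same near-top estimates (Lemma \ref{L13} for $\tilde h_{\sigma}$, Lemma \ref{L15} for $\tilde f_{\sigma}$, and Lemma \ref{L11}/Lemma \ref{L12} for $f_{\sigma}$) do all the work. The only difference is cosmetic: the regrouping $(\tilde h_{\sigma}+\tilde f_{\sigma})+f_{\sigma}$ is not needed, since the paper applies the Poincar\'e step to $h_{\sigma}$ itself, which vanishes at $z=H$ by \eqref{31}, and only afterwards splits $\partial_z h_{\sigma}$ by the triangle inequality.
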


\begin{proof}
We use the temperature bound \eqref{17} and the Dirichlet boundary conditions in \eqref{31} to estimate
\[
\left|\frac1{\delta}\int_{H-\delta}^H\la \theta h_{\sigma}\ra\, \dd z \right|\le \int_{H-\delta}^{H}\la |\partial_z h_{\sigma}|\ra\, \dd z.
\]
We use our earlier decomposition $h_{\sigma} = \tilde h_{\sigma}+f_{\sigma}+\tilde f_{\sigma}$ and the triangle inequality to further bound the right-hand side
\[
\left|\frac1{\delta}\int_{H-\delta}^H\la \theta h_{\sigma}\ra\, \dd z \right|\le \int_{H-\delta}^{H}\la |\partial_z \tilde h_{\sigma}|\ra\, \dd z + \int_{H-\delta}^{H}\la |\partial_z f_{\sigma}|\ra\, \dd z + \int_{H-\delta}^{H}\la |\partial_z \tilde f_{\sigma}|\ra\, \dd z.
\]
The first term on the right-hand side is controlled using Lemma \ref{L13}. The second one is estimated by using Lemma \ref{L11} and repeating the argument of Lemma \ref{L12}. For the third term we invoke Lemma \ref{L15}. All the bounds that we obtain are as in the statement, which establishes this proposition.
\end{proof}

For completeness, we provide the argument for \eqref{15} in the following theorem.

\begin{theorem}\label{T3}
Suppose that $\sigma\le H$. Then for $H\gg1$, there is the estimate
\[
\Nu \lesssim 1 +\sigma^{1/2}
\]
\end{theorem}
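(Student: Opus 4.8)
The plan is to combine the localisation estimate \eqref{1} with the decomposition of the vertical velocity that has been developed throughout this subsection. Recall that by \eqref{1} and the fact that we may replace $T$ by the mean-free temperature $\theta$, we have
\[
\Nu \le \frac1{\delta}\int_0^{\delta}\la w_{\sigma}\theta\ra\, \dd z + \frac1{\delta}
\]
for any $\delta\in(0,1)$. First I would split $w_\sigma = w_0 + (w_\sigma - w_0)$ and use the Chanillo--Malchiodi bound \eqref{24}, which gives $|\la w_0\theta\ra|\lesssim 1$ for $z\in[0,1]$ provided $H$ is large, so that the $w_0$-part contributes $O(1)$ to $\frac1\delta\int_0^\delta\la w_0\theta\ra\,\dd z$. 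This reduces matters to estimating the correction term, i.e.\ to \eqref{29}.

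Next I would invoke the decomposition \eqref{42}, $w_\sigma - w_0 = h_\sigma + g_\sigma$, so that
\[
\left|\frac1{\delta}\int_0^{\delta}\la (w_\sigma - w_0)\theta\ra\,\dd z\right| \le \left|\frac1{\delta}\int_0^{\delta}\la h_\sigma\theta\ra\,\dd z\right| + \left|\frac1{\delta}\int_0^{\delta}\la g_\sigma\theta\ra\,\dd z\right|.
\]
The first term is bounded by Proposition \ref{P3}, giving $\lesssim \sigma\delta + \sigma^{1/2}\delta^{3/2}$. For the second term I would use the symmetry identity \eqref{41} to rewrite $\frac1\delta\int_0^\delta\la\theta g_\sigma[\theta]\ra\,\dd z = \frac1\delta\int_{H-\delta}^H\la\theta^* h_\sigma[\theta^*]\ra\,\dd z$; since $\theta^*$ also satisfies the maximum principle \eqref{17}, Proposition \ref{P4} applies and yields the same bound $\lesssim \sigma\delta + \sigma^{1/2}\delta^{3/2}$. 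Collecting everything,
\[
\Nu \lesssim 1 + \sigma\delta + \sigma^{1/2}\delta^{3/2} + \frac1{\delta}.
\]

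Finally I would optimise in $\delta\in(0,1)$. Balancing $\frac1\delta$ against $\sigma\delta$ suggests $\delta\sim\sigma^{-1/2}$ (legitimate since $\sigma\ge$ some constant is the only nontrivial case; for $\sigma\lesssim 1$ one simply takes $\delta$ of order one and the claim $\Nu\lesssim 1$ follows directly, recovering the no-slip scaling). With $\delta\sim\sigma^{-1/2}$ one gets $\frac1\delta\sim\sigma^{1/2}$, $\sigma\delta\sim\sigma^{1/2}$, and $\sigma^{1/2}\delta^{3/2}\sim\sigma^{1/2}\sigma^{-3/4} = \sigma^{-1/4}\lesssim\sigma^{1/2}$, so that $\Nu\lesssim 1 + \sigma^{1/2}$ as claimed. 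One should check that $\delta\sim\sigma^{-1/2}\le 1$, which holds in the regime $\sigma\gtrsim 1$; in the complementary regime the constant choice of $\delta$ works, so the bound $\Nu\lesssim 1+\sigma^{1/2}$ is valid throughout $\sigma\le H$. There is no real obstacle remaining at this stage — all the analytic difficulty has already been absorbed into Proposition \ref{P2}, Lemma \ref{L10}, Lemma \ref{L11}, Lemma \ref{L15} and the resulting Propositions \ref{P3} and \ref{P4}; the present theorem is just the bookkeeping step that assembles these pieces and performs the one-parameter optimisation.
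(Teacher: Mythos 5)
Your proposal is correct and follows essentially the same route as the paper: it reduces the bound to the correction term via \eqref{24}/\eqref{29}, applies Propositions \ref{P3} and \ref{P4} (the latter through the symmetry identity \eqref{41}) to the decomposition \eqref{42}, and optimises with $\delta\sim 1$ for $\sigma\lesssim 1$ and $\delta\sim\sigma^{-1/2}$ for $1\lesssim\sigma\le H$. The paper's own proof is just a terser version of the same bookkeeping.
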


\begin{proof}We recall the decomposition $w_{\sigma}-w_0= h_{\sigma}+g_{\sigma}$ from \eqref{42} and estimate the two individual terms in the Nusselt number bound \eqref{29} as in   Propositions \ref{P3} and \ref{P4}. We obtain that
\[
\Nu \lesssim \delta \sigma +\delta^{\frac32}\sigma^{\frac12} + \frac1{\delta},
\]
where $\delta\in(0,1]$ is arbitrary. First, if $\sigma \lesssim 1$, the optimal choice for $\delta $ is $\delta =1$, which then gives $\Nu \lesssim 1$. On the other hand, if $1\le \sigma \le H$, it is $\delta^{3/2}\sigma^{1/2} \lesssim \delta \sigma$ because $\delta\le 1$ by assumption, and then $\delta = \sigma^{-1/2}$ is optimal. This choice gives $\Nu \lesssim \sigma^{1/2}$. In either case, we have established the statement of the theorem.
\end{proof}

\subsection{The free-slip regime}

The strategy of our proof is the Whitehead--Doering bound, which we simplify here dramatically. 

\begin{theorem}[Free-slip dominated case]\label{T2}
For $H\gg1$ there is the estimate
\[
\Nu \lesssim 1+H^{1/4}.
\]
\end{theorem}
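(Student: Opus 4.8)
The plan is to start from the localised Nusselt bound \eqref{1} and estimate $\la w_\sigma \theta\ra$ in a boundary layer of thickness $\delta$ directly, in the free-slip spirit of Whitehead and Doering, but without ever passing through the half-space kernel machinery. Because $w_\sigma = 0$ on $\{z=0\}$, writing $w_\sigma(z) = \int_0^z \partial_z w_\sigma$ gives $|w_\sigma(z)| \le \int_0^\delta |\partial_z w_\sigma|$, and combined with $\|\theta\|_{L^\infty}\le 1$ from \eqref{17} this yields
\[
\Nu \lesssim \int_0^\delta \la |\partial_z w_\sigma|\ra\,\dd z + \frac1\delta.
\]
By Jensen/Cauchy--Schwarz, $\int_0^\delta \la |\partial_z w_\sigma|\ra\,\dd z \lesssim \delta^{3/2}\left(\int_0^H \la(\partial_z w_\sigma)^2\ra\,\dd z\right)^{1/2}$ is too lossy; instead I expect one needs the sharper route $\int_0^\delta \la|\partial_z w_\sigma|\ra \le \delta^{1/2}\big(\int_0^\delta \la(\partial_z w_\sigma)^2\ra\big)^{1/2}$, and then to exploit that on a thin layer $\partial_z w_\sigma$ itself is small because $\partial_z w_\sigma$ vanishes to high order — one more integration using the Navier-slip condition $\partial_z w_\sigma = \mp \sigma \partial_z^2 w_\sigma$ at the walls, or a Hardy-type inequality, trades one more power of $\delta$ for a derivative.

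The key input is an $H^2$-type a priori estimate for $w_\sigma$ obtained by testing the fourth-order equation \eqref{35} against $w_\sigma$ and integrating by parts: the boundary terms $\la \partial_z w_\sigma \partial_z^2 w_\sigma\ra|_{z=0,H}$ are, by \eqref{22}, equal to $\mp\tfrac1\sigma\la(\partial_z w_\sigma)^2\ra|_{z=0,H}$, hence have a favourable sign, so
\[
\int_0^H \la |\grad^2 w_\sigma|^2\ra\,\dd z + \frac1\sigma \la(\partial_z w_\sigma)^2\ra\big|_{z=0,H} = \int_0^H \la \grad_y w_\sigma\cdot\grad_y\theta\ra\,\dd z \le \left(\int_0^H\la|\grad_y w_\sigma|^2\ra\right)^{1/2}\left(\int_0^H\la|\grad_y\theta|^2\ra\right)^{1/2}.
\]
The right-hand side is controlled by $\Nu^{1/2}$ via \eqref{23}, while the left-hand side controls $\int_0^H\la|\grad_y w_\sigma|^2\ra\,\dd z$ after using a Poincaré inequality in $z$ (legitimate since $w_\sigma$ vanishes on both plates). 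This closes to give $\int_0^H \la|\grad^2 w_\sigma|^2\ra\,\dd z \lesssim \Nu$, and similarly $\int_0^H \la|\grad w_\sigma|^2\ra\,\dd z \lesssim \Nu$. Plugging the resulting bound $\int_0^\delta \la|\partial_z w_\sigma|\ra\,\dd z \lesssim \delta^{3/2}\Nu^{1/2}$ into the localised inequality gives $\Nu \lesssim \delta^{3/2}\Nu^{1/2} + \delta^{-1}$, i.e. $\Nu^{1/2}\lesssim \delta^{3/2} + \delta^{-1}\Nu^{-1/2}$; optimising $\delta$ (balancing, $\delta \sim \Nu^{-1/5}$ roughly) and solving the resulting algebraic inequality for $\Nu$ yields $\Nu\lesssim H^{1/4}$ after also using the trivial a priori bound $\Nu\lesssim H$ — actually the cleanest version is to keep $\delta$ free, write $\Nu \lesssim \delta^{3/2}\Nu^{1/2}+\delta^{-1}$, absorb to get $\Nu \lesssim \delta^3 + \delta^{-1}$, and note this is minimised at $\delta\sim 1$ giving only $\Nu\lesssim 1$ unless an additional $H$-dependent gain is extracted; the genuine free-slip mechanism requires the refined estimate where the $z$-integral of $(\partial_z w_\sigma)^2$ over $[0,\delta]$ gains an extra factor, the point being that $\partial_z^2 w_\sigma$ rather than $\partial_z w_\sigma$ appears, giving $\int_0^\delta\la(\partial_z w_\sigma)^2\ra \lesssim \delta^3\int_0^H\la(\partial_z^2 w_\sigma)^2\ra\lesssim \delta^3\Nu$, hence $\Nu\lesssim \delta^2\Nu^{1/2}+\delta^{-1}$, so $\Nu^{1/2}\lesssim\delta^2$ and $\delta^{-1}\lesssim\Nu^{1/2}$ force $\Nu\lesssim\delta^{-2}$ and $\delta^{-4}\lesssim\Nu$, impossible unless... — the correct bookkeeping, following Whitehead--Doering, is $\Nu\lesssim \delta^{1/2}\big(\delta^2 \Nu\big)^{1/2}\cdot 1 + \delta^{-1} \sim \delta^{3/2}\Nu^{1/2}+\delta^{-1}$ combined with the maximum principle giving additionally $\int_0^\delta\la|\partial_z w_\sigma|\ra \lesssim \delta$, whichever is smaller; taking the min and optimising produces the exponent $5/12$ in original variables, i.e. $1/4$ here, via $\delta\sim H^{-1/3}$ in rescaled variables — $\Nu\lesssim \delta^{-1}\sim H^{1/4}$ when $\delta\sim H^{-1/4}$ balances $\delta^{3/2}\Nu^{1/2}\sim\delta^{-1}$ with $\Nu\sim H^{1/4}$.

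\textbf{Main obstacle.} The delicate point is obtaining the right power of $\delta$ in the boundary-layer estimate for $\la|\partial_z w_\sigma|\ra$: a naive Poincaré loses too much, so one must exploit that $w_\sigma$ and its vertical derivative are small near the plate in an $L^2(0,\delta;L^2_{\mathrm{horiz}})$ averaged sense, which forces careful use of both Dirichlet conditions, the Navier-slip relation $\partial_z w_\sigma=\mp\sigma\partial_z^2 w_\sigma$, and the global $H^2$ estimate $\int_0^H\la|\grad^2 w_\sigma|^2\ra\lesssim\Nu$. I would organise the computation so that the final algebraic inequality reads $\Nu\lesssim \delta^{-1}+\delta^a\Nu^{1/2}$ with the exponent $a$ chosen so that optimisation over $\delta\in(0,1]$, together with the crude bound $\Nu\lesssim H$, yields precisely $\Nu\lesssim 1+H^{1/4}$; getting $a$ right (it should be $3/2$, arising from one factor $\delta^{1/2}$ from Cauchy--Schwarz in $z$ and one factor $\delta$ from a Poincaré/Hardy step exploiting the vanishing of $w_\sigma$ at $z=0$) is the crux, and is exactly where the Whitehead--Doering idea, here streamlined, enters.
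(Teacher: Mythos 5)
Your proposal does not close, and the gap is in exactly the two places where the Whitehead--Doering mechanism has to enter. First, the global a priori estimates you claim are too strong. You assert that testing \eqref{35} with $w$ and "using a Poincar\'e inequality in $z$" closes to $\int_0^H\la|\grad^2 w|^2\ra\,\dd z\lesssim \Nu$ and $\int_0^H\la|\grad w|^2\ra\,\dd z\lesssim \Nu$. But the Poincar\'e constant on a slab of height $H$ is of order $H$, not of order one, so that closure can only give $\lesssim H^2\Nu$; and the correct statements (Lemma \ref{L6} in the paper) are $\int_0^H\la|\grad w|^2\ra\,\dd z\le H\Nu$, obtained by testing the bi-Laplace equation with $(-\Delta_y)^{-1}w$ so that the right-hand side is literally $\int_0^H\la wT\ra\,\dd z=H\Nu-1$, and then $\int_0^H\la|\grad^2 w|^2\ra\,\dd z\le H^{1/2}\Nu$ by testing with $w$, integrating by parts in the inhomogeneity and applying Cauchy--Schwarz against $\int_0^H\la|\grad_y T|^2\ra\,\dd z\le\Nu$ from \eqref{23}. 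The factor $H$ in the first bound is unavoidable, because $\Nu$ is the height-\emph{averaged} flux. Indeed, if your $H$-free versions were true, your own chain $\Nu\lesssim\delta^{3/2}\Nu^{1/2}+\delta^{-1}$ with $\delta\sim1$ would yield $\Nu\lesssim1$ uniformly in $\Ra$ --- you notice this yourself ("giving only $\Nu\lesssim 1$") but do not resolve it; it is the signal that the estimates are wrong rather than that an "additional gain" is missing.

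Second, the boundary-layer step is not available in the form you propose. Under Navier-slip, $\partial_z w$ does \emph{not} vanish at the plates (only $\partial_z w=\pm\sigma\partial_z^2w$ there, and in the free-slip limit $\partial_z w$ at the wall is in no sense small), so neither the Hardy-type "extra power of $\delta$" nor the inequality $\int_0^\delta\la(\partial_z w)^2\ra\,\dd z\lesssim\delta^3\int_0^H\la(\partial_z^2w)^2\ra\,\dd z$ can be justified. The actual Whitehead--Doering idea, which is the core of the paper's proof and is absent from your plan, is an interpolation over the \emph{full} height: since $\int_0^H\partial_z w\,\dd z=w|_{z=0}^{z=H}=0$ by the Dirichlet conditions, there is an interior height $\tilde z(t,y)$ where $\partial_z w$ vanishes, whence $(\partial_z w)^2\le 2\bigl(\int_0^H(\partial_z w)^2\,\dd z\int_0^H(\partial_z^2w)^2\,\dd z\bigr)^{1/2}$, and with Lemma \ref{L6} this gives $\sup_{[0,H]}\la(\partial_z w)^2\ra\lesssim (H\Nu\cdot H^{1/2}\Nu)^{1/2}=H^{3/4}\Nu$. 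Plugging into the localized bound yields $\Nu\lesssim\delta H^{3/8}\Nu^{1/2}+\delta^{-1}$, and $\delta\sim H^{-1/4}$ gives $\Nu\lesssim H^{1/4}$. In your write-up no factor of $H$ ever enters the final inequality $\Nu\lesssim\delta^{a}\Nu^{1/2}+\delta^{-1}$, so the concluding optimisation "at $\delta\sim H^{-1/4}$" is asserted rather than derived; the $H$-dependence must come from the global estimates and the sup-in-$z$ interpolation, which are precisely the missing ingredients.
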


Before turning to the proof, we derive some global estimates on the vertical velocity component $w$.

\begin{lemma}\label{L6}
The following bounds are true:
\begin{align}
\int_0^H \la |\grad w|^2\ra\, \dd z &\le H \Nu, \label{32}\\
\int_0^H \la |\grad^2 w|^2\ra\, \dd z &\le H^{1/2} \Nu\label{33}.
\end{align}
\end{lemma}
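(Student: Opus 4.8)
The plan is to bound the two dissipation integrals by testing the fourth-order equation \eqref{35} for $w$ with suitable test functions and then using \eqref{23}, the observation $\Nu = \int_0^H\la|\grad T|^2\ra\dd z$, together with the maximum principle \eqref{17}. For \eqref{32}, I would test $\Delta^2 w = -\Delta_y\theta$ with $w$ itself. Integrating by parts twice in the domain $[0,L]^2\times[0,H]$, the left-hand side produces $\int_0^H\la|\grad^2 w|^2\ra\dd z$ up to boundary terms on $\{z=0,H\}$, and the right-hand side produces $\int_0^H\la\grad_y\theta\cdot\grad_y w\ra\dd z$ after one integration by parts in $y$ (no horizontal boundary terms by periodicity). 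Before I try to estimate, I first need to make sure the boundary terms from the biharmonic operator are favourably signed: on $\{z=0,H\}$ we have $w=0$, so the only surviving boundary contribution involves $\partial_z w$ and $\partial_z^2 w$, which by \eqref{22} is $\mp\sigma(\partial_z^2 w)^2$ — i.e. a good (dissipative, nonpositive when moved to the other side) sign. So testing with $w$ actually controls $\int_0^H\la|\grad^2 w|^2\ra\dd z$, which is even stronger than \eqref{32}; alternatively one can test with a function whose Laplacian is $w$ to land exactly on $\int_0^H\la|\grad w|^2\ra\dd z$ on the left. In either route the right-hand side is then handled by Cauchy–Schwarz: $|\int_0^H\la\grad_y\theta\cdot\grad_y w\ra| \le (\int_0^H\la|\grad_y\theta|^2\ra)^{1/2}(\int_0^H\la|\grad_y w|^2\ra)^{1/2} \le \Nu^{1/2}\,(\int_0^H\la|\grad w|^2\ra)^{1/2}$, using \eqref{23}; absorbing one factor and noting one needs the extra factor $H$, this is where a Poincaré-type inequality in $z$ (available since $w$ vanishes on both plates) enters to pass between $\int\la|\grad w|^2\ra$ and $\int\la|\grad^2 w|^2\ra$, each integration by parts or Poincaré step on an interval of length $H$ costing a factor $H$ or $H^{1/2}$.

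More concretely, for \eqref{32} I expect the clean route to be: test \eqref{35} against $w$, get $\int_0^H\la|\grad^2 w|^2\ra\dd z + \frac1\sigma\la(\partial_z w)^2\ra|_{z=0,H} = \int_0^H\la\grad_y w\cdot\grad_y\theta\ra\dd z \le \left(\int_0^H\la|\grad_y w|^2\ra\dd z\right)^{1/2}\Nu^{1/2}$, then bound $\int_0^H\la|\grad_y w|^2\ra\dd z \le H^2\int_0^H\la|\grad_y\grad w|^2\ra\dd z$ is the wrong direction — instead use $\int_0^H\la|\grad w|^2\ra\dd z \le \big(\int_0^H\la|\grad^2 w|^2\ra\dd z\big)^{1/2}\big(\int_0^H\la w^2\ra\dd z\big)^{1/2}\cdot(\text{const})$ via integration by parts in $z$ combined with Poincaré twice (each $\|\cdot\|$ on $(0,H)$), giving $\int_0^H\la|\grad w|^2\ra\dd z \lesssim H^2\int_0^H\la|\grad^2 w|^2\ra\dd z$ — again wrong direction. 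The correct interpolation is the reverse: $\int_0^H\la|\grad^2 w|^2\ra\dd z \gtrsim H^{-2}\int_0^H\la|\grad w|^2\ra\dd z$ by Poincaré (using $w=0,\ \partial_z w$ with a boundary term on the plates), so from the tested identity $\int_0^H\la|\grad^2 w|^2\ra \le \big(\int_0^H\la|\grad w|^2\ra\big)^{1/2}\Nu^{1/2}$ and Poincaré one gets $\int_0^H\la|\grad w|^2\ra \lesssim H\,\Nu$, which is \eqref{32}. For \eqref{33}, test \eqref{35} against $w$ as above to get $\int_0^H\la|\grad^2 w|^2\ra\dd z \le \big(\int_0^H\la|\grad_y w|^2\ra\dd z\big)^{1/2}\Nu^{1/2}$, and then feed in \eqref{32} in the form $\int_0^H\la|\grad_y w|^2\ra\dd z \le \int_0^H\la|\grad w|^2\ra\dd z \le H\Nu$, yielding $\int_0^H\la|\grad^2 w|^2\ra\dd z \le H^{1/2}\Nu$, which is \eqref{33}.

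I should be careful about two points. First, the manipulations involving $\la\cdot\ra$ (a $\limsup$ of time averages) must be justified: all the identities here come from elliptic problems at fixed time, so one applies them pointwise in $t$, then time-averages, using that $\limsup$ respects the inequalities and that $\Nu$ as defined via \eqref{23} is exactly the time-averaged dissipation — the $z$-independence of the instantaneous flux noted after \eqref{23} is what makes $\Nu$ well-behaved. Second, the boundary terms in the integration by parts: after two integrations by parts in $z$ of $\int_0^H\la w\,\partial_z^4 w\ra$ (schematically), the terms are $[\la w\,\partial_z^3 w\ra - \la\partial_z w\,\partial_z^2 w\ra]_0^H$; the first vanishes since $w=0$ on the plates, and the second is $\mp\sigma\la(\partial_z^2 w)^2\ra$ by \eqref{22}, contributing $+\frac1\sigma\la(\partial_z w)^2\ra|_{z=0,H}\ge0$ to the left side after using $\partial_z w = \pm\sigma\partial_z^2 w$ again to rewrite it — so it only helps. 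The main obstacle is purely bookkeeping: getting the mixed horizontal/vertical derivative terms in $|\grad^2 w|^2$ to assemble correctly from the biharmonic form $\int(\Delta w)^2$ versus $\int|\grad^2 w|^2$ (they differ by a boundary term that again vanishes or has a good sign because $w=0$ on the plates and everything is periodic in $y$), and tracking the powers of $H$ through the Poincaré steps; there is no real analytic difficulty once \eqref{23} and \eqref{17} are in hand.
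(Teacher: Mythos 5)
Your derivation of \eqref{33} is essentially the paper's: test \eqref{35} with $w$, observe the boundary term $\sigma\la(\partial_z^2w)^2\ra|_{z=0,H}=\frac1\sigma\la(\partial_z w)^2\ra|_{z=0,H}\ge 0$, apply Cauchy--Schwarz to $\int_0^H\la\grad_y w\cdot\grad_y T\ra\,\dd z$, and combine \eqref{32} with \eqref{23}. The problem is your proof of \eqref{32}, on which \eqref{33} then rests. Your concrete route is: from the $w$-tested identity, $B\le A^{1/2}\Nu^{1/2}$ with $A=\int_0^H\la|\grad w|^2\ra\,\dd z$, $B=\int_0^H\la|\grad^2 w|^2\ra\,\dd z$, together with the Poincar\'e inequality $B\gtrsim H^{-2}A$, "one gets $A\lesssim H\Nu$". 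That implication is false: chaining $H^{-2}A\lesssim B\le A^{1/2}\Nu^{1/2}$ gives only $A\lesssim H^4\Nu$, and then your step for \eqref{33} would yield $B\lesssim H^2\Nu$, far weaker than $H^{1/2}\Nu$. To make your chain produce $H\Nu$ you would need a Poincar\'e-type inequality with constant $H^{-1/2}$ between $B$ and $A$, which is not available (the layer has thickness $H$ and $L\sim H$, so every Poincar\'e step costs a full factor $H$). So as written, \eqref{32} — and hence the lemma — is not established.

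The missing idea is the one you mention in passing and then abandon ("test with a function whose Laplacian is $w$"): the paper tests \eqref{35} with $(-\Delta_y)^{-1}w$. Then the inhomogeneity $-\Delta_y\theta$ pairs to give exactly $\int_0^H\la w\theta\ra\,\dd z=\int_0^H\la wT\ra\,\dd z=H\Nu-1$ by \eqref{36} and the temperature boundary conditions — no Cauchy--Schwarz and no absorption — while the left-hand side becomes $\int_0^H\la|\grad_y w|^2\ra\,\dd z+2\int_0^H\la(\partial_z w)^2\ra\,\dd z+\int_0^H\la|\grad_y(-\Delta_y)^{-1}\partial_z^2w|^2\ra\,\dd z$ plus a boundary term that is nonnegative by \eqref{22}. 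Dropping the extra nonnegative pieces gives $\int_0^H\la|\grad w|^2\ra\,\dd z\le H\Nu$ with the clean constant of the statement. With \eqref{32} obtained this way, your argument for \eqref{33} goes through exactly as in the paper.
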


\begin{proof}
We start with the proof of \eqref{32}. We test  the bi-Laplace equation \eqref{35} with $(-\Delta_y)^{-1} w$ and integrate  by parts twice. Using the Dirichlet boundary conditions in \eqref{22} and the periodicity in $y$, this leads to the identity
\begin{align*}
\mel 
\int_0^H \la |\grad_y w|^2\ra \, \dd z + 2\int_0^H \la (\partial_z w)^2\ra\,\dd z + \int_0^H \la |\grad_y^{-1}\partial_z^2 w|^2\ra\, \dd z + \left.\la \Delta_y^{-1} \partial_z w\partial_z^2 w\ra\right|_{z=0}^{z=H}\\
& = \int_0^H \la wT\ra\, \dd z.
\end{align*}
Using now the Navier-slip boundary conditions in \eqref{22}, we see that the boundary term has a sign,
\[
 \left. \la \Delta_y^{-1}\partial_z w\partial_z^2 w\ra\right|_{z=0}^{z=H}  =  \sigma \left.\la |\grad_y^{-1} \partial^2_z w|^2\ra\right|_{z=0,H}\ge0,
\]
and can thus be dropped. In view of the definition of the Nusselt number \eqref{36} and thanks to the boundary conditions for the temperature \eqref{21}, the inhomogeneity term above is identical to $H\Nu -1$. The estimate in \eqref{32} thus follows.

To derive \eqref{33}, we argue similarly. This time we test the bi-Laplace equation with $w$. Integrating by parts also in the inhomogeneity term and using the Cauchy--Schwarz inequality, we eventually arrive at
\begin{align*}
\int_0^H \la |\grad^2 w|^2\ra\, \dd z +  \sigma \left. \la (\partial_z ^2w)^2\ra\right|_{z=0,H} & \le \left(\int_0^H \la |\grad_y w|^2\ra\, \dd z\int_0^H\la |\grad_y T|^2\ra\, \dd z\right)^{1/2}.
\end{align*}
We drop the boundary term again, use the previous estimate and the representation \eqref{23} of the Nusselt number.
\end{proof}

We may now proceed with the proof of Theorem \ref{T2}.

\begin{proof}[Proof of Theorem \ref{T2}]
We start again with the local Nusselt bound \eqref{1}, use the maximum principle for the temperature $|T|\le 1$ and the Dirichlet boundary conditions for the vertical velocity component,
\[
\Nu \le \frac1{\delta}\int_0^{\delta} \la |w|\ra\, \dd z +\frac1{\delta} \le \int_0^{\delta} \la |\partial_z w|\ra\, \dd z + \frac1{\delta} \le \delta \sup_{[0,H]} \la (\partial_z w)^2\ra^{1/2} + \frac1{\delta}.
\]
The crucial observation by Whitehead and Doering was that, here, we may use the Dirichlet boundary conditions to realise that because of the identity
\[
\int_0^H \partial_z w(r,y,z)\, \dd z = 0,
\]
there must exists a $\tilde z=\tilde z(t,y)$ such that $\partial_z w(t,y,\tilde z)=0$. Using this information, we may invoke the fundamental theorem to observe that
\[
(\partial_z w(z))^2  = 2 \int_{\tilde z}^z \partial_z w\partial_z^2w\, \dd z \le 2\left(\int_0^H (\partial_z w)^2\, \dd z\int_0^H (\partial_z^2 w)^2\, \dd z\right)^{1/2},
\]
and thus, averaging in time and horizontal space and exploiting Lemma \ref{L6}, we have the bound
\[
\sup_{[0,H]} \la (\partial_z w)^2\ra \lesssim \left(\int_0^H\la |\grad w|^2\ra\, \dd z\int_0^H \la |\grad^2 w|^2\ra\,\dd z\right)^{1/2} \le H^{3/4}\Nu.
\]
Substituting this estimate in the previous Nusselt bound gives
\[
\Nu \lesssim \delta H^{3/8}\Nu^{1/2} +\frac1{\delta}.
\]
Optimizing in $\delta $ yields $\delta \sim H^{-1/4}$, which leads to  the result.
\end{proof}

\section*{Acknowledgement}
The author thanks Fabian Bleitner, Lukas Niebel and Camilla Nobili for stimulating discussions.	This work is funded by the Deutsche Forschungsgemeinschaft (DFG, German Research Foundation) under Germany's Excellence Strategy EXC 2044--390685587, Mathematics M\"unster: Dynamics Geometry Structure.

\bibliography{rbc_lit}

\begin{thebibliography}{10}

\bibitem{AhlersGrossmannLohse09}
G.~Ahlers, S.~Grossmann, and D.~Lohse.
\newblock Heat transfer and large scale dynamics in turbulent
  {R}ayleigh-{B}\'enard convection.
\newblock {\em Rev. Mod. Phys.}, 81:503--537, Apr 2009.

\bibitem{BleitnerNobili24}
F.~Bleitner and C.~Nobili.
\newblock Scaling laws for {R}ayleigh-{B}\'enard convection between
  {N}avier-slip boundaries.
\newblock {\em J. Fluid Mech.}, 998:Paper No. A24, 21, 2024.

\bibitem{Busse79}
F.~Busse.
\newblock The optimum theory of turbulence.
\newblock volume~18 of {\em Advances in Applied Mechanics}, pages 77 -- 121.
  Elsevier, 1979.

\bibitem{Busse70}
F.~H. Busse.
\newblock Bounds for turbulent shear flows.
\newblock {\em Journal of Fluid Mechanics}, 41(1):219--240, 1970.

\bibitem{ChanilloMalchiodi24}
S.~Chanillo and A.~Malchiodi.
\newblock Sharp bounds for the nusselt number in rayleigh--b\'enard convection
  and a bilinear estimate via carleson measures, 2024.

\bibitem{CoifmanMeyer78}
R.~R. Coifman and Y.~Meyer.
\newblock {\em Au del\`a{} des op\'erateurs pseudo-diff\'erentiels}, volume~57
  of {\em Ast\'erisque}.
\newblock Soci\'et\'e{} Math\'ematique de France, Paris, 1978.
\newblock With an English summary.

\bibitem{ConstantinDoering99}
P.~Constantin and C.~R. Doering.
\newblock Infinite {P}randtl number convection.
\newblock {\em J. Statist. Phys.}, 94(1-2):159--172, 1999.

\bibitem{DoeringConstantin94}
C.~R. Doering and P.~Constantin.
\newblock Variational bounds on energy dissipation in incompressible flows:
  Shear flow.
\newblock {\em Phys. Rev. E}, 49:4087--4099, May 1994.

\bibitem{DoeringConstantin96}
C.~R. Doering and P.~Constantin.
\newblock Variational bounds on energy dissipation in incompressible flows.
  iii. convection.
\newblock {\em Phys. Rev. E}, 53:5957--5981, Jun 1996.

\bibitem{DoeringOttoReznikoff06}
C.~R. Doering, F.~Otto, and M.~G. Reznikoff.
\newblock Bounds on vertical heat transport for infinite-{P}randtl-number
  {R}ayleigh-{B}\'enard convection.
\newblock {\em J. Fluid Mech.}, 560:229--241, 2006.

\bibitem{DrivasNguyenNobili22}
T.~D. Drivas, H.~Q. Nguyen, and C.~Nobili.
\newblock Bounds on heat flux for {R}ayleigh-{B}\'enard convection between
  {N}avier-slip fixed-temperature boundaries.
\newblock {\em Philos. Trans. Roy. Soc. A}, 380(2225):Paper No. 25, 16, 2022.

\bibitem{Grafakos14}
L.~Grafakos.
\newblock {\em Classical {F}ourier analysis}, volume 249 of {\em Graduate Texts
  in Mathematics}.
\newblock Springer, New York, third edition, 2014.

\bibitem{Howard63}
L.~N. Howard.
\newblock Heat transport by turbulent convection.
\newblock {\em Journal of Fluid Mechanics}, 17(3):405--432, 1963.

\bibitem{Howard72}
L.~N. Howard.
\newblock Bounds on flow quantities.
\newblock {\em Annual Review of Fluid Mechanics}, 4(1):473--494, 1972.

\bibitem{Malkus54}
W.~V.~R. Malkus.
\newblock The heat transport and spectrum of thermal turbulence.
\newblock {\em Proceedings of the Royal Society of London. Series A.
  Mathematical and Physical Sciences}, 225(1161):196--212, 1954.

\bibitem{Manneville2006}
P.~Manneville.
\newblock {\em Rayleigh-B{\'e}nard Convection: Thirty Years of Experimental,
  Theoretical, and Modeling Work}, pages 41--65.
\newblock Springer New York, New York, NY, 2006.

\bibitem{NetoEvansBonaccursoButtCraig2005}
C.~Neto, D.~R. Evans, E.~Bonaccurso, H.-J. Butt, and V.~S.~J. Craig.
\newblock Boundary slip in newtonian liquids: a review of experimental studies.
\newblock {\em Reports on Progress in Physics}, 68(12):2859, oct 2005.

\bibitem{Nobili23}
C.~Nobili.
\newblock The role of boundary conditions in scaling laws for turbulent heat
  transport.
\newblock {\em Mathematics in Engineering}, 5(1):1--41, 2023.

\bibitem{NobiliOtto17}
C.~Nobili and F.~Otto.
\newblock Limitations of the background field method applied to
  {R}ayleigh-{B}\'enard convection.
\newblock {\em J. Math. Phys.}, 58(9):093102, 46, 2017.

\bibitem{Otero2002}
J.~Otero.
\newblock {\em Bounds for the heat transport in turbulent convection}.
\newblock ProQuest LLC, Ann Arbor, MI, 2002.
\newblock Thesis (Ph.D.)--University of Michigan.

\bibitem{OttoSeis11}
F.~Otto and C.~Seis.
\newblock Rayleigh-{B}\'enard convection: improved bounds on the {N}usselt
  number.
\newblock {\em J. Math. Phys.}, 52(8):083702, 24, 2011.

\bibitem{Seis13}
C.~Seis.
\newblock Laminar boundary layers in convective heat transport.
\newblock {\em Comm. Math. Phys.}, 324(3):995--1031, 2013.

\bibitem{Seis15}
C.~Seis.
\newblock Scaling bounds on dissipation in turbulent flows.
\newblock {\em J. Fluid Mech.}, 777:591--603, 2015.

\bibitem{Siggia1994}
E.~D. Siggia.
\newblock High {R}ayleigh number convection.
\newblock In {\em Annual review of fluid mechanics, {V}ol.\ 26}, pages
  137--168. Annual Reviews, Palo Alto, CA, 1994.

\bibitem{WhiteheadDoering11}
J.~P. Whitehead and C.~R. Doering.
\newblock Ultimate state of two-dimensional rayleigh-b\'enard convection
  between free-slip fixed-temperature boundaries.
\newblock {\em Phys. Rev. Lett.}, 106:244501, Jun 2011.

\bibitem{WhiteheadDoering12}
J.~P. Whitehead and C.~R. Doering.
\newblock Rigid bounds on heat transport by a fluid between slippery
  boundaries.
\newblock {\em J. Fluid Mech.}, 707:241--259, 2012.

\end{thebibliography}
\bibliographystyle{abbrv}

\end{document}